\documentclass[1996/10/24]{amsart}
\usepackage{color}
\usepackage{amssymb,stmaryrd}
\usepackage{graphicx}

\setlength{\topmargin}{-0.4in}
\setlength{\textheight}{9.0in}     %8.5
\setlength{\textwidth}{6.5in}      % {6truein}
\setlength{\oddsidemargin}{.20in}  %
\setlength{\evensidemargin}{.20in} %

%%%%%%%%%%%%%%color%%%%%%%%%%%%%%%%%%%%%%%%%%%%%%%%
\usepackage{color}
\definecolor{black}{rgb}{0,0,0}

\definecolor{red}{rgb}{1,0,0}

\definecolor{blue}{rgb}{0,0,1}

%%%%%%%%%%%%%%%%%%%%%%%%%%%%%%%%%%%%%%%%%%%%%%%%%

\newcommand{\Ct}{{\mathcal T}}

\newcommand{\Cn}{{\mathcal N}}

\def\bff{\textit{\textbf{f}}}

\def\bfg{\textit{\textbf{g}}}
\def\bfn{\textit{\textbf{n}}}

\def\bfv{\textit{\textbf{v}}}

\def\bfI{\textit{\textbf{I}}}

\def\bfV{\textit{\textbf{V}}}

%%% add new notation %%%
\usepackage{url}

\newtheorem{thm}{Theorem}[section]

\newtheorem{lem}[thm]{Lemma}
\theoremstyle{remark}

\newcommand{\bu}{{\bf u}}
\newcommand{\bv}{{\bf v}}
\newcommand{\bbf}{{\bf f}}
\newcommand{\bx}{{\bf x}}
\newcommand{\dx}{~d{\bf x}}
\newtheorem{algorithm}{\bf Algorithm}

\newtheorem{remark}{Remark}[section]
\numberwithin{equation}{section}
\numberwithin{figure}{section}
%%%%%%%%%%%%%%%%%%%%%%%%%%%%%%%%%%%%%%%%%%%%%%%%%%%%%%%%%%%%%%%%
\begin{document}

\title[Threshold dynamics method for topology optimization for fluids]{An efficient threshold dynamics method for topology optimization for fluids}

\author{Huangxin Chen}
\address{School of Mathematical Sciences and Fujian Provincial Key Laboratory on Mathematical Modeling and High Performance Scientific Computing, Xiamen University, Fujian, 361005, China}
\email{chx@xmu.edu.cn}

\author{Haitao Leng}
\address{Department of Mathematics, The Hong Kong University of Science and Technology, Clear Water Bay, Kowloon, Hong Kong, China}
\email{mahtleng@ust.hk}

\author{Dong Wang}
\address{Department of Mathematics, University of Utah, Salt Lake City, Utah, USA}
\email{dwang@math.utah.edu}

\author{Xiao-Ping Wang}
\address{Department of Mathematics, The Hong Kong University of Science and Technology, Clear Water Bay, Kowloon, Hong Kong, China}
\email{mawang@ust.hk}

\thanks{ This research was supported in part by the Hong Kong Research Grants Council (GRF grants 16302715, 16324416, 16303318, and NSFC-RGC joint research grant N-HKUST620/15).  H. Chen and D. Wang acknowledge the hospitality of the Department of Mathematics at the Hong Kong University of Science and Technology during their visit.The work of H. Chen was supported by the NSF of China (Grant No. 11771363, 91630204, 51661135011), the Fundamental Research Funds for the Central Universities (Grant No. 20720180003), and the Program for Prominent Young Talents in Fujian Province University.
}

\subjclass[2010]{35K93, 35K05, 65M12, 35Q35, 49Q10, 65M60, 76S05}

\keywords{Topology optimization, Stokes flow, threshold dynamics method, mixed finite-element method, convolution, fast Fourier transform.}

\date{}

\begin{abstract}
We propose an efficient  threshold dynamics method for topology optimization for fluids modeled with the Stokes equation. The proposed algorithm is based on minimization of an objective energy function that consists of the dissipation power in the fluid and the perimeter approximated by nonlocal energy, subject to a fluid volume constraint and the incompressibility condition. We show that the minimization problem can be solved with an iterative scheme in which the Stokes equation is approximated by a Brinkman equation. The indicator functions of the fluid-solid regions are then updated according to simple convolutions followed by a thresholding step. We demonstrate mathematically that the iterative algorithm has the total energy decaying property.
The proposed algorithm is simple and easy to implement.  A simple adaptive time strategy is also used to accelerate the convergence of the iteration. Extensive numerical experiments in both two and three dimensions show that the proposed iteration algorithm converges in much fewer iterations and is more efficient than many existing methods. In addition, the numerical results show that the algorithm is very robust and insensitive to the initial guess and the parameters in the model.
\end{abstract}

\maketitle

%%%%%%%%%%%%%%%%%%%%%%%%%%%%%%%%%%%%%%%%%%%%%%%%%%%%%%%%%%%%%%%%%%
\section{Introduction}
%%%%%%%%%%%%%%%%%%%%%%%%%%%%%%%%%%%%%%%%%%%%%%%%%%%%%%%%%%%%%%%%%%
Topology optimization in fluid mechanics has become a significant problem due to its application in many industrial problems such as the optimization of transport vehicles and biomechanical structure. The process of topology optimization allows the introduction of new boundaries as part of the solution and is thus more flexible than shape optimization, which requires that the topology be predetermined. The method of topology optimization was originally developed for the optimal design in structural mechanics \cite{BK1988,BS2003} and has been applied in a variety of physical fields such as acoustics, electromagnetics, fluid flow, and thermal problems \cite{BS2003,Borrvall2003,Duhring2008,Sigmund2008,van2014numerical,Dbouk2017}. Topology optimization was first applied to fluid mechanics by Borrvall and Petersson \cite{Borrvall2003} by adopting the concept of density methods to Stokes flows. In \cite{Borrvall2003}, the domain with fluid-solid regions was treated as the porous medium, the Brinkman flow was introduced to obtain a well-posed problem to minimize the total dissipation power, and the discrete optimization problem was further solved with the method of moving asymptotes (MMA) \cite{Svanberg1987} to obtain the optimal designed regions for fluids and solids. Topology optimization in fluid mechanics has since been extended to the Darcy-Stokes flow model \cite{Guest2006,Wiker2007}, Navier-Stokes flow \cite{Gersborg_Hansen2005,Olesen2006,Zhou2008,Deng2011,garcke2015numerical,Villanueva2017},  and non-Newtonian flow \cite{Pingen2010}, and it has also been applied in the design of more complicated fluidic devices \cite{Andreasen2009,Okkels2005,Okkels2007}.

Several successful methods have also been recently introduced to improve the performance of topology optimization in fluid mechanics. For instance, the level set method was applied to fluidic topology optimization (cf. \cite{Zhou2008,Challis2009,Villanueva2017} and the references therein), and the fluid-solid interface is described by the zero-level set of a level set function. In \cite{Villanueva2017}, the authors further studied the fluidic topology optimization framework by combining the level set method and the extended finite-element method. Phase field-based topology optimization for fluids was considered in \cite{garcke2015numerical}, in which the gradient flow method was used to find the optimal topology. Among these methods, a critical step is to update the fluid-solid regions by solving the Hamilton-Jacobi equations in the level set method \cite{Zhou2008}, by solving a parameter optimization problem via a nonlinear programming method \cite{Villanueva2017}, or by solving the Cahn-Hilliard or Allen-Cahn system via the phase field approach \cite{garcke2015numerical}.

The  threshold dynamics method developed by Merriman, Bence, and Osher (MBO) \cite{merriman1992diffusion,MBO1993,merriman1994motion} is an efficient method for simulation of the motion of the interface driven by the mean curvature.
To be more precise, let $D \subset \mathbb{R}^d$ be a domain whose boundary
$\Gamma= \partial D$ is to be evolved via motion by mean curvature.
The MBO method is an iterative method, and at each time step, it generates a new interface, $\Gamma_{\text{new}}$ (or equivalently, $D_{\text{new}}$) via the following two steps:

\smallskip

\noindent {\bf Step 1.} Solve the Cauchy initial value problem for the heat diffusion equation until time $t = \tau$,
\begin{align*}
& u_t = \Delta u , \\
& u(t=0,\cdot) = \chi_{D},
\end{align*}
where $\chi_D$ is the indicator function of domain $D$. Let $\tilde u(x) = u(\tau,x)$.

\smallskip

\noindent  {\bf Step 2.} Obtain a new domain $D_{\text{new}}$ with boundary $\Gamma_{\text{new}} = \partial D_{\text{new}}$  by
\begin{align*}
D_{\text{new}} = \left\{ x\colon \tilde u (x) \geq \frac{1}{2} \right\}.
\end{align*}

The MBO method has been shown to converge to continuous motion by mean curvature
\cite{barles1995simple,chambolle2006convergence,evans1993convergence,swartz2017convergence}. Esedoglu and Otto gave a variational formulation for the original MBO scheme and  successfully generalized this type of method to multiphase problems with arbitrary surface tensions \cite{esedoglu2015threshold}. The method has attracted considerable attention due to its simplicity and unconditional stability. It has since been extended to deal with many other applications, including the problem of area-preserving or volume-preserving interface motion \cite{ruuth2003simple},
image processing \cite{wang2016efficient,esedog2006threshold,merkurjev2013mbo},
problems of  anisotropic interface motion \cite{merriman2000convolution,ruuth2001convolution,bonnetier2010consistency,elsey2016threshold},
the wetting problem on solid surfaces \cite{xu2016efficient},
the generation of quadrilateral meshes \cite{Viertel2017},
graph partitioning and data clustering  \cite{Gennip2013},
and auction dynamics \cite{jacobsauction}.
Various algorithms and rigorous error analysis have been introduced to refine and extend the original MBO method and related methods for these problems (see, {\it e.g.}, \cite{esedoglu2008threshold,
ishii2005optimal,merriman1994motion,
ruuth1998diffusion,ruuth1998efficient,wang2001gauss}).
Adaptive methods have also been used to accelerate this type of method \cite{jiang2016nufft} based on nonuniform fast Fourier transform. Laux et al. \cite{laux2016convergence,laux2016convergence2} rigorously proved the convergence of the method proposed by \cite{esedoglu2015threshold}, and a generalized manifold-valued threshold dynamics method was developed by \cite{osting2017generalized,wang2018diffusion,osting2018diffusion}.

 In this paper, we introduce an efficient and simple strategy based on the threshold dynamics method to update the topology of fluid-solid regions.
%The idea that the perimeter of a set can be approximated by nonlocal energy was used by \cite{esedoglu2015threshold} to design an efficient threshold dynamics method for multiphase problems with arbitrary surface tensions.
 In our approach, the total energy consists of the dissipation power in the fluid and the perimeter regularization and is subject to a fluid volume constraint and an incompressibility condition. The perimeter term is based on convolution of the heat kernel with the characteristic functions of regions. Based on minimization of an approximate total energy, an efficient threshold dynamics method is derived for topology optimization for fluids. The porous medium approach is used in our algorithm, and we introduce
 the Brinkman equation, which ``interpolates'' between the Stokes equation in the flow region and some Darcy flow through a porous medium (a weakened nonfluid region).
We then solve the Brinkman equation for the whole domain by the standard mixed finite-element method and update the fluid-solid regions by convolution and with a simple thresholding step. In particular, the convolutions can be efficiently computed on a uniform grid by fast Fourier transform (FFT) with the an optimal complexity of $O(N \log N)$.
%In a  pioneering study by Borrvall and Petersson \cite{Borrvall2003}, for topology optimization of Stokes flow, an approximate nonlinear optimization problem must be solved by the interior penalty method in the MMA process.  In our method, the minimization of the objective functional is solved directly.  We need only solve a Brinkman problem, and there is no need to solve an adjoint problem at each iteration step. Also, motivated by \cite{xu2016efficient}, we apply the adaptive in time strategy to accelerate convergence of the energy. The intuition is that we initially use a relatively large time step to reach a solution close to the minimizer and then a smaller time step to make the solution more accurate.
The proposed algorithm is very simple and easy to implement.  Extensive numerical results show that the proposed algorithm converges at many fewer iterations than the method given by \cite{Borrvall2003},  which indicates the high efficiency of the proposed algorithm. In addition, the numerical results show that the algorithm is very robust and insensitive to the initial guess and the parameters.  We  also show that the method has  the total energy decaying property.
%Furthermore, we interpret the proposed algorithm as a special case of the Lagrangian multiplier approach.

The paper is organized as follows. In Section \ref{mainres}, we show the mathematical model. In Section \ref{sec:derivation}, we introduce an approximate energy to the total energy and derive an efficient threshold dynamics method. The unconditional stability of the threshold dynamics method ($i.e.$, the energy decaying property) is proved in Section \ref{sec:stability}. We discuss the numerical implementation in Section \ref{sec:implementation} and verify the efficiency and the energy decaying property of the algorithm in Section \ref{sec:example}. We make conclusions, and discuss some ideas for future work in Section \ref{sec:conclusion}.

\section{Mathematical model}\label{mainres}
In this section, we consider the mathematical model for topology optimization for fluids in Stokes flow. Denote $\Omega \in \mathbb{R}^d$ $(d=2,3)$ as the computational domain, which is fixed throughout optimization, and assume that $\Omega$ is a bounded Lipschitz domain with an outer unit normal ${\bf n}$ such that $\mathbb{R}^d \setminus \overline{\Omega}$ is connected. Furthermore, we denote $\Omega_0 \subset \Omega$ as the domain of the fluid, which is a Caccioppoli set\footnote{In mathematics, a Caccioppoli set is a set whose boundary is measurable and has a (at least locally) finite measure. \url{https://en.wikipedia.org/wiki/Caccioppoli_set}} and $\Omega \setminus \Omega_0  \in \Omega$ as the solid domain. Throughout the paper, we use the standard notations and definitions for Sobolev spaces (cf. \cite{Adams1975}). Our goal is to determine an optimal shape of  $\Omega_0$ that minimizes the following objective functional consisting of  the total potential power and a perimeter regularization term,
\begin{align}
\min_{(\Omega_0,\bu)} J_0(\Omega_0,\bu) = \int_{\Omega}\left( \frac{\mu}{2}|D \bu|^2 - \bu \cdot \bbf\right)\dx + \gamma |\Gamma| \label{ener_ori}
\end{align}
subject to
\begin{subequations}\label{cosSharp}
\begin{align}
\nabla\cdot\bu= 0, &  \ \ \textrm{in}  \ \ \Omega, \\
\nabla p - \nabla \cdot (\mu \nabla \bu)  = \bbf, &   \ \ \textrm{in}  \ \ \Omega_0, \label{cosfluid}\\
\bu=0,& \  \ {\rm in} \ \Omega \setminus \Omega_0, \label{cossolid}\\
\bu|_{\partial \Omega} = \bu_D, & \ \ \textrm{on} \ \ \partial \Omega,  \\
|\Omega_0| = \ \beta |\Omega| &  \ \textrm{with a fixed parameter}  \  \beta \in (0,1).
\end{align}
\end{subequations}
Here,  $\bu: \Omega \rightarrow  \mathbb{R}^d$, $D \bu$ is the distributional derivative of $\bu$, $\mu$ is the dynamic viscosity of the fluid, $p$ is the pressure, $\bu_D: \partial \Omega \rightarrow \mathbb{R}^d$  is a given function, $\bbf: \Omega \rightarrow  \mathbb{R}^d$ is a given external force, $ |\Gamma| $ is the perimeter of the boundary of $\Gamma=\partial \Omega_0$, and $\gamma>0$ is a weighting parameter.

\section{Derivation of the algorithm}\label{sec:derivation}

In this section, we  develop an efficient threshold dynamics method for the topology optimization problem discussed in (\ref{ener_ori}) and (\ref{cosSharp}) for fluids in Stokes flow.
Note that the goal is to determine the optimal interface between liquid and solid that  minimizes functional  \eqref{ener_ori} subject to constraints \eqref{cosSharp}.
%To design such numerical schemes, one can represent the interface by the level set approach, front tracking method, or others. In \cite{garcke2015numerical}, the authors applied the phase field idea to use a smooth order parameter $\varphi$ there. Along the fluid-solid interface,  $\varphi$ transits from $1$ (fluid region) to $-1$ (solid region) smoothly with an interface thickness $\epsilon$.
Motivated by the idea from the  threshold dynamics methods developed by \cite{esedoglu2015threshold}, \cite{xu2016efficient}, \cite{wang2016efficient}, we use the indicator functions for the fluid region and the solid region to implicitly represent the interface.

\subsection{Approximate energy}

Define an admissible set $\mathcal{B}$ as follows:
\begin{align}
\mathcal{B}:= &\{(v_1,v_2)\in BV(\Omega)\ |\ v_i(x)=\{0, 1\},  v_1(x)+v_2(x)=1 \ a.e. \ {\rm in} \ \Omega,\hbox { and } \int_{\Omega} v_1\dx=V_0  \},\label{def_b}
\end{align}
where $BV(\Omega)$ is the vector space of functions with bounded variation in $\Omega$, and $V_0$ is the fixed volume of the fluid region. We introduce $\chi_1(\bx)$ to denote the indicator function of the fluid region $\Omega_0$, $i.e.$,
\[\chi_1(\bx) :=
\begin{cases}
1,  & \textit{if}  \ \   \bx \in \Omega_0, \\
0,  & \textit{otherwise},
\end{cases}
\]
and $\chi_2(\bx)$ as the indicator function of $\Omega \setminus \Omega_0$, $i.e.$, $\chi_2(\bx) = 1- \chi_1(\bx)$. The interface $\Gamma$ is then implicitly represented by $\chi_1$ and $\chi_2$. Let $\chi = (\chi_1,\chi_2)$ and we have $\chi \in \mathcal{B}$. It is well known that  the perimeter of the interface $\Gamma$ can be approximated by,
\begin{align} \label{perimeter}
|\Gamma| \approx \sqrt{\frac{\pi}{\tau}}\int_{\Omega} \chi_1 G_\tau * \chi_2 \dx ,
\end{align}
where $G_\tau(\bx) = \dfrac{1}{(4\pi \tau)^{\frac{d}{2}}} \exp\left(-\dfrac{|\bx|^2}{4\tau}\right)$ is the Gaussian kernel (See \cite{esedoglu2015threshold}).

We solve  the optimization problem \eqref{cosSharp} by iteration. At each iteration, one must solve the Stokes equation in the fluid domain, which is changing in the iteration. It is more convenient numerically to use  the porous medium approach  as in \cite{garcke2015numerical,chen2014one}.  The idea is to ``interpolate"  between the Stokes equation in the fluid domain ($i.e.$, $\{\bx| \ \chi_1(\bx) =1\}$) and $\bu = 0 $ in the solid domain ($i.e.$, $\{\bx| \ \chi_2(\bx) =1\}$) by introducing an  additional penalization term,
\begin{subequations}\label{cos}
\begin{align}
\nabla\cdot\bu= 0, &  \ \ \textrm{in}  \ \ \Omega, \label{cos1} \\
\nabla p - \nabla \cdot (\mu \nabla \bu) + \alpha(\bx) \bu = \bbf, &   \ \ \textrm{in}  \ \ \Omega, \label{cos2} \\
\bu|_{\partial \Omega} = \bu_D, & \ \ \textrm{on} \ \ \partial \Omega. \label{cos3}
%|\Omega_0| =  \ \beta |\Omega| & \ \textrm{with a fixed parameter} \  \beta \in (0,1). \label{cos4}
\end{align}
\end{subequations}
Here, $\alpha(\bx)$ is a smooth function that varies between $0$ and $\bar{\alpha}_\tau$  through a thin interface layer $\Gamma$, and  $\bar{\alpha}_\tau^{-1}$ is the permeability. In the current representation of the interface, we use the $0.5$ level set of  $\phi=G_{\tau}* \chi_2$ to approximate the position of the interface $\Gamma$.  It is well known that such $\phi$ is a smooth function between $[0,1]$ and admits a change from $0$ to $1$ in an $O(\sqrt{\tau})$ thin layer. Hence,  $\alpha$ is given by
\begin{align}\label{eq:alpha}
\alpha= \bar{\alpha}_\tau\phi  =\bar{\alpha}_\tau G_{\tau}* \chi_2 .
\end{align}
In the limiting model ($i.e.$, $\tau \searrow 0$), $\bar{\alpha}_\tau$ should be set as $+\infty$ to make the constraints $\{\bu=0 \ \ \textrm{in}  \ \ \Omega\setminus\Omega_0\}$ satisfy. Also, to ensure that the velocity vanishes outside the fluid domain when $\tau \searrow 0$, we add a penalty term $\frac{\bar{\alpha}_\tau }{2} G_{\tau}* \chi_2 |\bu|^2 $ to the objective functional. In subsequent calculations, for numerical consideration, we fix $\bar{\alpha}_\tau$ as a sufficiently large constant, $\bar{\alpha}$. In this porous media approach, the system (\ref{cos}) is solved for a fixed domain $\Omega$.

Finally, combining \eqref{ener_ori}, \eqref{perimeter}, \eqref{eq:alpha}, and the penalty term, we arrive at the following approximate objective functional
\begin{align} \label{eq:objapprox2}
J^\tau(\chi,\bu) =& \int_{\Omega} \left( \frac{\mu}{2} |D \bu|^2+ \frac{\bar{\alpha}}{2}|\bu|^2G_{\tau}* \chi_2 -\bu \cdot \bbf+ \gamma \sqrt{\frac{\pi}{\tau}} \chi_1 G_\tau * \chi_2 \right) \dx .
\end{align}
\begin{remark}
For simplicity, we use the same $\tau$ in the second and the fourth terms of the above approximate energy. Indeed, one can also use different values of $\tau$ in the two terms and the property of the algorithm  will be similar.
\end{remark}

Now, we consider the following approximate formulation of the problem by
\begin{align}\label{opt}
\min_{(\chi,\bu)} J^\tau(\chi,\bu), \ \textrm{subject to} \   \chi =(\chi_1, \chi_2) \in \mathcal{B} \ \textrm{and} \  \bu \ \textrm{satisfy} \  \eqref{cos}.
\end{align}
%\DW{To do: Maybe change previous ``$\min$" to ``$\inf$" and prove the existence of the minimizer and then write the problem into $\min$.}
In the following, we give  the derivation of the  threshold dynamics scheme to solve \eqref{opt}.
\subsection{Derivation of the scheme.}
In this section, we use a coordinate descent algorithm to minimize the approximate energy (\ref{eq:objapprox2}) with constraints \eqref{cos}. A similar idea has been applied in the design of a threshold dynamics method of image segmentation \cite{wang2016efficient}. Given an initial guess $\chi^0 = (\chi_1^0,\chi_2^0)$, we compute a series of minimizers
\begin{align*}
\bu^0, \chi^1, \bu^1, \chi^2, \cdots, \bu^{k}, \chi^{k+1}, \cdots
\end{align*}
such that
\begin{align}
\bu^{k}=&\arg \min_{\bu\in \mathcal{S}}J ^\tau(\chi^k,\bu), \label{Eq:MinApproEnergySplit1}\\
\chi^{k+1}=&\arg \min_{\chi \in  \mathcal{B}}J ^\tau(\chi,\bu^{k}), \label{Eq:MinApproEnergySplit2}
\end{align}
for $k=0,1,2,\cdots .$ Here, the admissible set $\mathcal{S}$ is defined as
\[\mathcal{S}:=\left\{\bu \in H_{\bu_D}^1(\Omega,\mathbb{R}^d)\ |\  \nabla \cdot \bu = 0 \right\}\]
 where $H_{\bu_D}^1(\Omega,\mathbb{R}^d) = \{ \bu \in H^1(\Omega,\mathbb{R}^d)  \  | \ \bu |_{\partial \Omega} = \bu_D \}$, and $\mathcal{B}$ is defined in (\ref{def_b}).

Given  the $k$-th iteration $\chi^k$, we first  solve \eqref{Eq:MinApproEnergySplit1} to get the $\bu^{k}$.
%Since this is constrained minimization problems, we add a Lagrangian multiplier to make $\bu$ satisfy $\nabla \cdot \bu = 0$. That is, we solve
It is easy to see that the constraint minimization problem is equivalent to the following
\[\bu^{k}= \arg \min_{\bu\in H_{\bu_D}^1(\Omega,\mathbb{R}^d)}J ^\tau(\chi^k,\bu)+\int_{\Omega} p \nabla \cdot \bu \dx\]
with $p$ as a Lagrangian multiplier.  Variation of the above functional leads to the following Brinkman equation.
%Then, this can be done through calculating the variation of $\mathcal{J}^\tau(\chi,\bu)+p \nabla \cdot \bu$ with respect to $\bu$ and $p$ ($i.e.$, $\frac{\delta (\mathcal{J}^\tau(\chi,\bu)+p \nabla \cdot \bu)}{\delta \bu}$ and $\frac{\delta (\mathcal{J}^\tau(\chi,\bu)+p \nabla \cdot \bu)}{\delta p} $), and solving $\frac{\delta (\mathcal{J}^\tau(\chi,\bu)+p \nabla \cdot \bu)}{\delta \bu} = 0$ and $\frac{\delta (\mathcal{J}^\tau(\chi,\bu)+p \nabla \cdot \bu)}{\delta p} = 0$ for $\bu \in H_{\bu_D}^1(\Omega,\mathbb{R}^d)$.
%Since these are constrained minimization problems, we need to add Lagrangian multipliers to satisfy the constraints for general cases. However, from Lemma \ref{lem:1},  for a given $\chi^k$, the admissible set  $\mathcal{S}^k$ only has one element.
That is, $\bu^{k}$ can be obtained by solving
\begin{align}\label{eq:stokes}
\begin{cases}
\nabla \cdot \bu = 0, \quad {\rm in}\ \Omega\\
\nabla  p  - \nabla \cdot (\mu \nabla \bu) +  \alpha(\chi^k)\bu  = \bbf,\quad {\rm in}\ \Omega  \\
\bu|_{\partial \Omega} = \bu_D
\end{cases}
\end{align}
where $\alpha(\chi^k) =\frac{\bar{\alpha}}{2}G_{\tau}* \chi^k_2  $.
%Coincidently, this system agrees with the constraints \eqref{cos}.
Because $J ^\tau(\chi^k,\bu)$ is convex in $\bu$,  the solution  $(\bu^k,p^k)$ of (\ref{eq:stokes}) is a minimizer of $ J^\tau(\chi^k,\bu)$.
The following lemma shows the existence of $\bu$ for the system \eqref{eq:stokes} for a given $\chi \in \mathcal{B}$.
\begin{lem}[\cite{garcke2015numerical,GP1986}] \label{lem:1}
For every $\chi \in \mathcal{B}$, some $\bu \in H_{\bu_D}^1(\Omega, \mathbb{R}^d)$ exist that satisfy $\nabla\cdot \bu = 0$ such that
\begin{align}\label{eq:solvability}
\int_{\Omega} \mu \nabla \bu \cdot \nabla \bv +  \alpha(\chi) \bu \cdot \bv \ \dx = \int_{\Omega} \bbf \cdot \bv\dx, \ \ \forall \, \bv\in \bfV ,
\end{align}
where $\bfV:=\{ \bfv \in H^1_0(\Omega,\mathbb{R}^d)  \   |  \ \nabla \cdot \bfv = 0\}$.
\end{lem}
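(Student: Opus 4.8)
The plan is to recast \eqref{eq:solvability} as a variational problem on the closed subspace $\bfV$ of $H^1_0(\Omega,\mathbb{R}^d)$ and invoke the Lax--Milgram theorem. First I would remove the inhomogeneous boundary condition: assuming the natural compatibility condition $\int_{\partial\Omega}\bu_D\cdot\bfn=0$, I would fix a divergence-free lift $\bfw_D\in H^1(\Omega,\mathbb{R}^d)$ with $\bfw_D|_{\partial\Omega}=\bu_D$ and $\nabla\cdot\bfw_D=0$; the existence of such a lift is classical and relies on a Bogovski\u\i-type construction (see \cite{GP1986}). Writing the sought velocity as $\bu=\bfw_D+\bfw$ with $\bfw\in\bfV$, problem \eqref{eq:solvability} becomes: find $\bfw\in\bfV$ such that $a(\bfw,\bfv)=L(\bfv)$ for all $\bfv\in\bfV$, where
\begin{align*}
a(\bfw,\bfv) := \int_{\Omega}\mu\,\nabla\bfw\cdot\nabla\bfv + \alpha(\chi)\,\bfw\cdot\bfv \dx,
\qquad
L(\bfv) := \int_{\Omega}\bbf\cdot\bfv\dx - a(\bfw_D,\bfv).
\end{align*}

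Next I would verify the hypotheses of Lax--Milgram on $\bfV$, which is a closed subspace of $H^1_0(\Omega,\mathbb{R}^d)$ and hence itself a Hilbert space. Continuity of $a$ and $L$ follows from the Cauchy--Schwarz inequality once one observes that $\alpha(\chi)=\tfrac{\bar{\alpha}}{2}G_{\tau}*\chi_2$ is smooth and bounded: since $0\le\chi_2\le1$ in $\Omega$ and $G_{\tau}$ is an integrable positive Gaussian, one has $0\le\alpha(\chi)\le\tfrac{\bar{\alpha}}{2}$, so the zeroth-order coefficient lies in $L^\infty(\Omega)$. Coercivity is the one point that uses the structure of the problem: because $\alpha(\chi)\ge0$, the zeroth-order contribution is nonnegative and may be discarded, giving $a(\bfv,\bfv)\ge\mu\|\nabla\bfv\|_{L^2}^2$, and the Poincar\'e inequality on $H^1_0(\Omega,\mathbb{R}^d)$ upgrades this to $a(\bfv,\bfv)\ge c\,\|\bfv\|_{H^1}^2$ for some $c>0$. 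Lax--Milgram then yields a (unique) $\bfw\in\bfV$, and $\bu=\bfw_D+\bfw$ is the desired solution. Since $a$ is symmetric and positive definite, $\bfw$ is equivalently the unique minimizer of the convex quadratic $\tfrac12 a(\cdot,\cdot)-L(\cdot)$, which is consistent with the assertion that $(\bu^k,p^k)$ minimizes $J^\tau(\chi^k,\cdot)$.

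I expect the main obstacle to be not the Lax--Milgram step, which is routine once coercivity is secured, but the construction of the divergence-free extension $\bfw_D$ of the boundary datum $\bu_D$; this is precisely where the compatibility condition $\int_{\partial\Omega}\bu_D\cdot\bfn=0$ is needed and where I would invoke the lifting result of Girault--Raviart \cite{GP1986} rather than prove it from scratch. A secondary point worth recording is that the pressure $p$ is absent from \eqref{eq:solvability}: testing only against divergence-free $\bfv\in\bfV$ annihilates the $\nabla p$ term, so the lemma legitimately concerns the velocity alone, and recovering $p$ as a Lagrange multiplier would be a separate step requiring the inf--sup (LBB) condition, which is not needed for the existence claim as stated.
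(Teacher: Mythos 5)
Your proof is correct and is essentially the argument the paper points to: Lemma \ref{lem:1} is given no proof in the paper itself, only the citation to \cite{garcke2015numerical,GP1986}, and those references proceed exactly as you do --- a divergence-free lifting of $\bu_D$ under the compatibility condition $\int_{\partial\Omega}\bu_D\cdot{\bf n}\,ds=0$ (Girault--Raviart), followed by Lax--Milgram on the closed subspace $\bfV$, with continuity from $0\le\alpha(\chi)\le\tfrac{\bar{\alpha}}{2}$ and coercivity from $\alpha(\chi)\ge 0$ plus the Poincar\'e inequality. Your closing remarks (uniqueness, equivalence with minimizing the convex quadratic, and the pressure requiring a separate inf--sup step) are likewise accurate and nothing further is needed.
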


Given $\bu^{k}$, we now rewrite the objective functional $J^\tau(\chi,\bu)$ into $\tilde{J}^{\tau,k}(\chi)$ as follows:
\begin{align}
\tilde{J}^{\tau,k}(\chi):= J^\tau(\chi,\bu^{k}) = \int_{\Omega} \frac{\bar{\alpha}}{2}\chi_2 G_\tau* |\bu^k|^2 \dx +  \gamma \sqrt{\frac{\pi}{\tau}} \int_{\Omega}\chi_1 G_\tau * \chi_2 \dx + \int_{\Omega} \dfrac{\mu}{2} |D \bu^k|^2 -\bu^k \cdot \bbf \dx.
\end{align}
The next step is to find  $\chi^{k+1}$ such that
\begin{align} \label{Eq:MinApprochi}
\chi^{k+1}=&\arg\min\limits_{\chi \in \mathcal{B}} \tilde{J}^{\tau,k}(\chi).
\end{align}
It is the minimization of a concave functional on a nonconvex admissible set $ \mathcal{B}$.  However, we can relax it to a problem defined on a convex admissible set by finding $r^{k+1}$ such that
\begin{align}
r^{k+1}=&\arg\min\limits_{r \in \mathcal{H}} \tilde{J}^{\tau,k}(r) , \label{Eq:RelaxMinApprochi}
\end{align}
where $\mathcal{H}$ is the convex hull of $\mathcal{B}$ defined as follows:
\begin{align}
\mathcal{H}:= &\{(v_1,v_2)\in BV(\Omega)\ |\ v_i(x)\in [0, 1], i=1,2, \hbox { and } v_1(x)+v_2(x)=1\ a.e. \ {\rm in} \ \Omega, \int_{\Omega} v_1\dx=V_0  \},
\end{align}

The following lemma shows that the relaxed problem (\ref{Eq:RelaxMinApprochi}) is equivalent to the original problem (\ref{Eq:MinApprochi}). Therefore, we can solve the relaxed problem (\ref{Eq:RelaxMinApprochi}) instead.

\begin{lem} \label{Lemma}
Let $\bu\in H_{\bu_D}^1(\Omega,\mathbb{R}^d)$ be a given function and $r = (r_1,r_2)$. Then
\begin{align}
\arg\min\limits_{r \in \mathcal{H}} \tilde{J}^{\tau,k}(r)=\arg\min\limits_{r \in \mathcal{B}} \tilde{J}^{\tau,k}(r).
\end{align}
\begin{proof}
Let  $\tilde{r}=(\tilde{r}_1,\tilde{r}_2)\in\mathcal{H}$ be a minimizer of the functional
$\tilde{J}^{\tau,k}(r)$ on $\mathcal{H}$.
Because $\mathcal{B}\subset\mathcal{H}$, we have
\begin{align*}
 \tilde{J}^{\tau,k}(\tilde{r}) &=\min_{r\in\mathcal{H}}\tilde{J}^{\tau,k}(r)\leq \min_{r\in\mathcal{B}} \tilde{J}^{\tau,k}(r) .
\end{align*}
Therefore, we need only prove  that $\tilde{r}\in\mathcal{B}$.

We prove  by contradiction.  If $\tilde{r}\not\in\mathcal{B}$, there is a set $A\in\Omega$
 and a constant $0<C_0<\frac12$, such that $|A|>0$ and
$$
0<C_0<\tilde r_1(\bx),\tilde r_2(\bx)<1-C_0, \ \ \ \hbox{for all } \bx\in A.
$$
We divide $A$ into two sets $A=A_1\cup A_2$ such that $A_1 \cap A_2=\emptyset$ and $|A_1|=|A_2|=|A|/2$.
Denote $r^t= (r_1^t,r_2^t)$ where $r_1^t=\tilde{r}_1+t \chi_{A_1}-t\chi_{A_2}$ and
$r_2^t=\tilde{r}_2-t \chi_{A_1}+t\chi_{A_2}$ with $\chi_{A_1}$ and $\chi_{A_2}$ being the indicator functions of the domain $A_1$ and $A_2$, respectively.
When $0<t<C_0$, we have $0<r_1^t, r_2^t<1$ and
\begin{equation*}
r_1^t+r_2^t= \tilde{r}_1+\tilde{r}_2=1,\ \hbox{and } \int_{\Omega} r_1^t\dx=\int_{\Omega}\tilde{r}_1\dx=V_0.
\end{equation*}
This implies that $r^t \in \mathcal{H}$.
Furthermore, direct computations give,
\begin{align*}
\frac{d^2}{dt^2}\tilde{J}^{\tau,k}(r)
&=2\gamma\frac{\sqrt{\pi}}{\sqrt{\tau}}\int_{{\Omega}}\frac{d}{dt} r_1^t G_{\tau}*\frac{d}{dt} r^t_2\dx\\
&=2\gamma\frac{\sqrt{\pi}}{\sqrt{\tau}}\int_{{\Omega}}(\chi_{A_1}-\chi_{A_2}) G_\tau*(\chi_{A_2}-\chi_{A_1})\dx\\
&=-2\gamma\frac{\sqrt{\pi}}{\sqrt{\tau}}\int_{{\Omega}}(\chi_{A_1}-\chi_{A_2}) G_\tau*(\chi_{A_1}-\chi_{A_2})\dx\\
& = -2\gamma\frac{\sqrt{\pi}}{\sqrt{\tau}}\int_{{\Omega}}\left(G_{\tau/2}*(\chi_{A_1}-\chi_{A_2}) \right)\left(G_{\tau/2}*(\chi_{A_1}-\chi_{A_2})\right)\dx \\
&\leq 0.
\end{align*}
The penultimate step comes from the fact that the heat kernel is a self-adjoint operator and forms a semigroup with various values of $\tau$.  From the above inequality, the functional is concave on the point $\tilde{r}$. Thus, $\tilde{r}$ cannot be a minimizer of the functional. This contradicts the assumption.
\end{proof}
\end{lem}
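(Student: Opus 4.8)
The plan is to exploit that, with $\bu^k$ fixed, $\tilde J^{\tau,k}$ is a \emph{concave} functional of $\chi$, so its minimum over the convex set $\mathcal{H}$ should be attained at an extreme point, and the extreme points of $\mathcal{H}$ are precisely the members of $\mathcal{B}$. Since $\mathcal{B}\subset\mathcal{H}$ gives immediately $\min_{\mathcal{H}}\tilde J^{\tau,k}\le\min_{\mathcal{B}}\tilde J^{\tau,k}$, it suffices to prove that every minimizer $\tilde r$ over $\mathcal{H}$ already lies in $\mathcal{B}$; the asserted equality of argmin sets then follows because the two minimum values coincide and $\mathcal{B}\subset\mathcal{H}$. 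First I would record the structure of the functional: writing $\chi_1=1-\chi_2$, the term $\int_\Omega\frac{\bar{\alpha}}{2}\chi_2\,G_\tau*|\bu^k|^2\dx$ is \emph{linear} in $\chi_2$, the term $\int_\Omega\frac{\mu}{2}|D\bu^k|^2-\bu^k\cdot\bbf\dx$ is a constant independent of $\chi$, and only the perimeter term $\gamma\sqrt{\pi/\tau}\int_\Omega\chi_1\,G_\tau*\chi_2\dx$ contributes a genuine (quadratic) $\chi$-dependence.

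Next I would set up a proof by contradiction using the volume-preserving perturbation from the excerpt. Assuming a minimizer $\tilde r=(\tilde r_1,\tilde r_2)\in\mathcal{H}$ does not lie in $\mathcal{B}$, there is a constant $0<C_0<\tfrac12$ for which the set $A=\{\bx:C_0<\tilde r_1(\bx),\tilde r_2(\bx)<1-C_0\}$ has positive measure. Splitting $A=A_1\cup A_2$ into disjoint pieces of equal measure and perturbing by $r_1^t=\tilde r_1+t\chi_{A_1}-t\chi_{A_2}$, $r_2^t=1-r_1^t$ keeps $r^t\in\mathcal{H}$ for $|t|<C_0$ and, crucially, preserves $\int_\Omega r_1^t\dx=V_0$ because the gain on $A_1$ exactly cancels the loss on $A_2$.

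The heart of the argument is the behaviour of $g(t):=\tilde J^{\tau,k}(r^t)$. Because the constant and linear terms contribute nothing under two derivatives, a direct computation reduces $g''$ to the perimeter term alone; using self-adjointness of the heat kernel together with the semigroup identity $G_\tau=G_{\tau/2}*G_{\tau/2}$ one obtains
\begin{align*}
g''(t)=-2\gamma\sqrt{\frac{\pi}{\tau}}\int_\Omega\bigl(G_{\tau/2}*(\chi_{A_1}-\chi_{A_2})\bigr)^2\dx.
\end{align*}
This is the step I expect to be the main obstacle: the excerpt only records $g''\le0$, but ruling out $\tilde r$ as a minimizer requires \emph{strict} concavity. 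I would supply strictness by noting that Gaussian convolution is injective (its Fourier symbol $e^{-\tau|\xi|^2/2}$ never vanishes) and that $G_{\tau/2}*(\chi_{A_1}-\chi_{A_2})$ is real-analytic; since $\chi_{A_1}-\chi_{A_2}$ is not the zero function, this convolution is not identically zero, so its zero set has measure zero and the integral over $\Omega$ is strictly positive. Hence $g''(t)<0$ on $(-C_0,C_0)$.

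Finally, a strictly concave $g$ satisfies $g(0)>\tfrac12\bigl(g(t)+g(-t)\bigr)$ for $0<t<C_0$, so $\min\{g(t),g(-t)\}<g(0)$ with $r^{\pm t}\in\mathcal{H}$, contradicting the minimality of $\tilde r=r^0$. Therefore $\tilde r\in\mathcal{B}$, which forces $\min_{\mathcal{H}}\tilde J^{\tau,k}=\min_{\mathcal{B}}\tilde J^{\tau,k}$; since every $\mathcal{H}$-minimizer then lies in $\mathcal{B}$ while every $\mathcal{B}$-minimizer attains this common value and so minimizes over $\mathcal{H}$ as well, the two argmin sets coincide. I would take the existence of a minimizer over $\mathcal{H}$ as given (it can be secured by the direct method from lower semicontinuity in $BV$), the real content being this bang-bang property.
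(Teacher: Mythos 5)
Your proposal is correct and follows the same route as the paper's proof: the inequality $\min_{\mathcal{H}}\le\min_{\mathcal{B}}$ from $\mathcal{B}\subset\mathcal{H}$, the contradiction argument via the volume-preserving two-set perturbation $r_1^t=\tilde r_1+t\chi_{A_1}-t\chi_{A_2}$ on a positive-measure set where $\tilde r$ is non-binary, and the reduction of $g''(t)$ to the perimeter term with the semigroup identity $G_\tau=G_{\tau/2}*G_{\tau/2}$ and self-adjointness yielding concavity along the segment. Where you genuinely go beyond the paper is the strictness step, and this is not a cosmetic addition: the paper only records $g''\le 0$ and concludes ``thus $\tilde r$ cannot be a minimizer,'' which as written is insufficient, since a direction along which $g$ is affine (the degenerate case $g''\equiv 0$) would not contradict minimality of an interior point; mere non-strict concavity would at best show that \emph{some} minimizer lies in $\mathcal{B}$, not that \emph{every} $\mathcal{H}$-minimizer does, and the lemma as stated asserts equality of the argmin sets. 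Your fix --- $\chi_{A_1}-\chi_{A_2}\not\equiv 0$ since $|A_1|=|A_2|>0$ with $A_1\cap A_2=\emptyset$, the Gaussian's nonvanishing Fourier symbol forcing $G_{\tau/2}*(\chi_{A_1}-\chi_{A_2})\not\equiv 0$, and real-analyticity guaranteeing its zero set is null so that $\int_\Omega\bigl(G_{\tau/2}*(\chi_{A_1}-\chi_{A_2})\bigr)^2\dx>0$ --- delivers $g''(t)<0$ and makes the contradiction via $g(0)>\tfrac12\bigl(g(t)+g(-t)\bigr)$ airtight. Your closing bookkeeping (coincidence of the two minimum values plus membership of all $\mathcal{H}$-minimizers in $\mathcal{B}$ implies the argmin sets coincide) is also spelled out more carefully than in the paper, which leaves it implicit; flagging the existence of an $\mathcal{H}$-minimizer as an assumption handled by the direct method matches what the paper tacitly assumes.
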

Now, we show that \eqref{Eq:RelaxMinApprochi} can be solved  with a simple threshold dynamics method. Because $\tilde{J}^{\tau,k}(r)$ is quadratic in $r$, we first linearize the energy $\tilde{J}^{\tau,k}(r)$ at $r^k$ by
\begin{align}\label{eq:Linearization}
\tilde{J}^{\tau,k}(r) \approx \tilde{J}^{\tau,k}(r^k)+\mathcal{L}^{\tau,k}_{r^k}(r - r^k),
\end{align}
where
\begin{align}\label{eq:Linearization1}
\mathcal{L}^{\tau,k}_{r^k}(r) = &   \int_{\Omega} \left( \gamma \sqrt{\frac{\pi}{\tau}} r_1 G_\tau * r_2 ^k+ \gamma \sqrt{\frac{\pi}{\tau}} r_2 G_\tau * r_1^k + r_2 \frac{\bar{\alpha}}{2} G_\tau * |\bu^k|^2 \right) \dx \\
=& \int_{\Omega}\left( r_1 \phi_1+r_2 \phi_2 \right) \dx. \nonumber
\end{align}
Here $\phi_1=\gamma \sqrt{\frac{\pi}{\tau}} G_\tau * r_2^k $ and $\phi_2=\frac{\bar{\alpha}}{2} G_\tau * |\bu^k|^2+\gamma \sqrt{\frac{\pi}{\tau}}G_\tau * r_1^k$. Then (\ref{Eq:RelaxMinApprochi}) can be approximately reformulated into
\begin{equation}
\label{Eq:MinApproEnergySplit3relaxlinear2}
\chi^{k+1} =\arg \min_{r \in \mathcal{H}} \mathcal{L}^{\tau,k}_{r^k}(r)=\arg \min_{r \in \mathcal{H}} \int_{\Omega} \left( r_1 \phi_1+r_2 \phi_2 \right) \dx.
\end{equation}
The following lemma, in particular, (\ref{321})  shows that \eqref{Eq:MinApproEnergySplit3relaxlinear2} can be solved in a pointwise manner by
\begin{align} \label{choicechi}
\begin{cases}
\chi_1^{k+1}(\bx) = 1 \hbox{ and } \chi_2^{k+1}(\bx) = 0,  \ \ \textit{if}  \ \ \phi_1(\bx)<\phi_2(\bx)+\delta, \\
\chi_1^{k+1}(\bx) = 0 \hbox{ and } \chi_2^{k+1} (\bx)= 1,  \ \ \textit{otherwise},
\end{cases}
\end{align}
where $\delta$ is chosen as a constant such that $\int_{\Omega} \chi_1^{k+1} \dx= V_0$.

\begin{lem}\label{lem:stability3}
Let $\phi_1=\gamma \sqrt{\frac{\pi}{\tau}} G_\tau * \chi_2^k $, $\phi_2=\frac{\bar{\alpha}}{2} G_\tau * |\bu|^2+\gamma \sqrt{\frac{\pi}{\tau}}G_\tau * \chi_1^k$ and
\begin{equation}
D_1^{k+1}=\{ \bx \in\Omega | \;\phi_1-\phi_2< \delta \}
\end{equation}
for some $\delta$ such that $|D_1^{k+1}|=V_0$. Then for $\chi^{k+1} = (\chi_1^{k+1},\chi_2^{k+1})$ with $\chi_1^{k+1} = \chi_{D_1^{k+1}}$ and $\chi_2^{k+1} = 1-\chi_1^{k+1}$, we have
\[ \mathcal{L}^{\tau,k}_{\chi^k}(\chi^{k+1})\leq \mathcal{L}^{\tau,k}_{\chi^k}(\chi^k) \]
for all $\tau>0$.
\end{lem}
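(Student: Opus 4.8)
The plan is to recognize that the threshold rule defining $\chi^{k+1}$ produces \emph{exactly} the minimizer of the linear functional $\mathcal{L}^{\tau,k}_{\chi^k}$ over the convex set $\mathcal{H}$; once that is established, the desired inequality is immediate because $\chi^k\in\mathcal{B}\subset\mathcal{H}$ is itself one admissible competitor. First I would eliminate $r_2$ by using the pointwise constraint $r_1+r_2=1$ that holds on $\mathcal{H}$. Writing $r_2=1-r_1$ gives
\[
\mathcal{L}^{\tau,k}_{\chi^k}(r)=\int_{\Omega}\phi_2\dx+\int_{\Omega} r_1\,(\phi_1-\phi_2)\dx,
\]
so the first term is independent of $r$, and minimizing $\mathcal{L}^{\tau,k}_{\chi^k}$ over $\mathcal{H}$ reduces to minimizing $\int_{\Omega} r_1(\phi_1-\phi_2)\dx$ over the set $\{\,0\le r_1\le 1\text{ a.e.},\ \int_{\Omega}r_1\dx=V_0\,\}$. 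This is a linear program whose solution is dictated by the bathtub principle: the mass $r_1=1$ should be placed where $\phi_1-\phi_2$ is smallest, i.e.\ on the superlevel cut $D_1^{k+1}=\{\phi_1-\phi_2<\delta\}$ at the level $\delta$ pinned down by the volume constraint $|D_1^{k+1}|=V_0$.

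The key step is a one-line rearrangement argument made transparent by shifting with $\delta$. For an arbitrary competitor $r=(r_1,r_2)\in\mathcal{H}$, set $g=\phi_1-\phi_2$ and $\psi=\chi_1^{k+1}=\chi_{D_1^{k+1}}$, and write
\[
\int_{\Omega}(r_1-\psi)\,g\dx=\int_{\Omega}(r_1-\psi)(g-\delta)\dx+\delta\int_{\Omega}(r_1-\psi)\dx .
\]
The last integral vanishes since both $r_1$ and $\psi$ integrate to $V_0$. It then suffices to check that the integrand $(r_1-\psi)(g-\delta)$ is nonnegative pointwise: on $D_1^{k+1}$ we have $\psi=1$ (hence $r_1-\psi\le 0$) together with $g-\delta<0$, while on the complement $\psi=0$ (hence $r_1-\psi\ge 0$) together with $g-\delta\ge 0$, so the product is $\ge 0$ in both regions. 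Therefore $\int_{\Omega}r_1 g\dx\ge\int_{\Omega}\psi g\dx$, which is precisely $\mathcal{L}^{\tau,k}_{\chi^k}(\chi^{k+1})\le\mathcal{L}^{\tau,k}_{\chi^k}(r)$ for every $r\in\mathcal{H}$. Specializing to $r=\chi^k\in\mathcal{B}\subset\mathcal{H}$ yields the claim.

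I do not expect a serious obstacle: the essential content is just the sign bookkeeping on $D_1^{k+1}$ and its complement, which the $\delta$-shift renders routine. The only technical point worth flagging is the existence of a threshold $\delta$ realizing $|D_1^{k+1}|=V_0$ exactly; because $\phi_1$ and $\phi_2$ are convolutions with the smooth kernel $G_\tau$, the distribution function of $g$ is continuous unless $g$ is constant on a set of positive measure, and in any event the lemma already takes such a $\delta$ as a hypothesis, so no extra argument is needed. I would also emphasize that, in contrast to the concavity computation in Lemma~\ref{Lemma}, this step uses only the linear structure of $\mathcal{L}^{\tau,k}_{\chi^k}$ and the volume constraint, and does not invoke the self-adjointness or semigroup property of the heat kernel.
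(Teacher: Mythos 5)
Your proof is correct, and it takes a genuinely different (and in fact stronger) route than the paper's. The paper establishes the key inequality \eqref{321} only against binary competitors: for $\chi=(\chi_{\hat D_1},\chi_{\hat D_2})\in\mathcal{B}$ it forms the symmetric-difference sets $A_1=\hat D_1\setminus D_1^{k+1}$ and $A_2=D_1^{k+1}\setminus\hat D_1$, uses $|A_1|=|A_2|$ from the shared volume, and bounds $\int_{\Omega}\bigl(\chi_{A_2}(\phi_1-\phi_2)-\chi_{A_1}(\phi_1-\phi_2)\bigr)\dx\le\delta\,(|A_2|-|A_1|)=0$ via the defining inequalities of $D_1^{k+1}$ on each piece. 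You instead eliminate $r_2$ through the constraint $r_1+r_2=1$, subtract the constant $\delta$ --- legitimate because $\int_{\Omega}r_1\dx=\int_{\Omega}\psi\dx=V_0$ kills the shift term --- and invoke the pointwise sign inequality $(r_1-\psi)(g-\delta)\ge 0$, i.e.\ the bathtub principle. This proves that $\chi^{k+1}$ minimizes $\mathcal{L}^{\tau,k}_{\chi^k}$ over the entire convex hull $\mathcal{H}$, not merely over $\mathcal{B}$; specializing to $r=\chi^k$ then gives the lemma, exactly as the paper's weaker comparison does. The two arguments share the same mechanism (the level $\delta$ is precisely what converts the volume constraint into a usable cancellation; the paper's $A_1$, $A_2$ bookkeeping is your pointwise sign check restricted to indicator functions), but yours buys something extra: it shows the relaxed linearized problem \eqref{Eq:MinApproEnergySplit3relaxlinear2} is solved \emph{exactly} by a binary function, giving an independent, purely linear-programming confirmation of the tightness of the relaxation that parallels, and does not rely on, the concavity argument of Lemma~\ref{Lemma}. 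Your closing observations are also accurate: neither self-adjointness nor the semigroup property of $G_\tau$ is needed at this step, your strict/non-strict split ($g<\delta$ on $D_1^{k+1}$, $g\ge\delta$ on its complement) matches the definition of $D_1^{k+1}$, and the existence of a volume-realizing $\delta$ is indeed a hypothesis of the lemma, so no extra argument is owed there.
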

\begin{proof}
Because $\mathcal{L}^{\tau,k}_{\chi^k}$ is a linear functional, we only need to prove that there holds
\begin{align} \label{321}
\mathcal{L}^{\tau,k}_{\chi^k}(\chi^{k+1})\leq \mathcal{L}^{\tau,k}_{\chi^k}(\chi)
\end{align}
for all $\chi = (\chi_1,\chi_2)\in\mathcal{B}$.

For each $(\chi_1,\chi_2)\in\mathcal{B}$, we know $\chi_1=\chi_{\hat{D}_1}$ and $\chi_2=\chi_{\hat{D}_2}$ for some open sets $\hat{D}_1$,
$\hat{D}_2$ in $\Omega$, such that $\hat D_1\cap \hat D_2=\emptyset$, $\hat D_1\cup \hat D_2=\Omega$ and $|\hat D_1|=V_0$.
Let $A_1=\hat D_1\setminus  D_1^{k+1}=D_2^{k+1}\setminus \hat D_2$ and $A_2=\hat D_2 \setminus D_2^{k+1}=D_1^{k+1}\setminus \hat D_1$.
We must have $|A_1|=|A_2|$ due to the volume conservation property. Because $A_1\subset D_2^{k+1}$, we have
$$\phi_1(\bx)-\phi_2(\bx)\geq \delta, \ \  \chi_1^{k+1}(\bx)-\chi_1(\bx)=-1, \ \ \ \ \forall \bx\in A_1.$$
Similarly, because $A_2\in D_1^{k+1}$, we have
$$\phi_1(\bx)-\phi_2(\bx)<\delta,\ \ \chi_1^{k+1}(\bx)-\chi_1(\bx)=1,  \ \ \ \ \forall \bx\in A_2.$$
Therefore, using $\chi_1^{k+1}-\chi_1+\chi_2^{k+1}-\chi_2 = 0$, we have
\begin{align*}
\mathcal{L}^{\tau,k}_{\chi^k}(\chi^{k+1})- \mathcal{L}^{\tau,k}_{\chi^k}(\chi)
=& \gamma \sqrt{\frac{\pi}{\tau}} \int_{\Omega} (\chi_1^{k+1}-\chi_1)\phi_1+(\chi_2^{k+1}-\chi_2)\phi_2 \dx \\
= &\gamma \sqrt{\frac{\pi}{\tau}} \int_{\Omega} (\chi_1^{k+1}-\chi_1)(\phi_1-\phi_2) \dx \\
= & \gamma \sqrt{\frac{\pi}{\tau}} \int_{\Omega} \left(\chi_{A_2}(\phi_1-\phi_2)- \chi_{A_1}(\phi_1-\phi_2)\right) \dx \\
\leq &\gamma \sqrt{\frac{\pi}{\tau}} \int_{\Omega}\left( \chi_{A_2}\delta- \chi_{A_1}\delta \right)\dx  = \gamma \sqrt{\frac{\pi}{\tau}} \delta (|A_2|-|A_1|) = 0.
\end{align*}
\end{proof}

%Note that $\phi_1$ and $\phi_2$ are two given functions depending only on $\bu^{k}$ and $\chi^k$. Because
%\[\chi_1^{k+1}+\chi_2^{k+1} = 1, \quad \textrm{for any} \  \chi_1^{k+1}\geq 0, \chi_2^{k+1}\geq 0,
%\]
%problem (\ref{Eq:MinApproEnergySplit3relaxlinear2}) is then solved in a pointwise manner by
%\begin{align} \label{choicechi}
%\begin{cases}
%\chi_1^{k+1} = 1 \hbox{ and } \chi_2^{k+1} = 0,  \ \ \textit{if}  \ \ \phi_1<\phi_2+\delta, \\
%\chi_1^{k+1} = 0 \hbox{ and } \chi_2^{k+1} = 1,  \ \ \textit{otherwise}.
%\end{cases}
%\end{align}

To determine the value of $\delta$, one can treat $\int_{\Omega} \chi_1^{k+1} \dx - V_0$ as a function of $\delta$ and use an iteration method ($e.g.$, bisection method or Newton's method) to find the root of $\int_{\Omega} \chi_1^{k+1} \dx - V_0 = 0$. For the uniform discretization of $\Omega$, a more efficient method is the  quick-sort technique proposed in \cite{xu2016efficient}.  Assume we have a uniform discretization of $\Omega$ with grid size $h$, we can approximate $\int_{\Omega} \chi_1^{k+1} \dx $ by $M h^2$, we then sort the values of $ {\phi}_1- {\phi}_2$ in an ascending order and simply set $ \chi_1^{k+1} = 1$ on the first $M$ points and $ \chi_2^{k+1} = 1$ on the other points.

\begin{remark}\label{alg_rem}
In many implementations, one may solve Stokes equation on nonuniform grid points. To preserve the volume for the discretization on nonuniform grids, although the volume cannot be simply approximated by the number of grid points times the size of each cell, a similar technique can be applied.  One can still sort the values of $\phi_1-\phi_2$ in ascending order, save the index into $\mathcal{S}$, calculate the integrating weight at each grid point into $\mathcal{V}$, and set $V=0$ and $i=0$. Then, $\delta$ can be simply found by:
\[{\sl while} \ \ V<V_0; \ \ i \leftarrow i+1;  \ \ V = V+\mathcal{V}(\mathcal{S}(i));  \ \  {\sl end}; \ \  \delta = \phi_1(\mathcal{S}(i+1))- \phi_2(\mathcal{S}(i+1)).\]
\end{remark}
 Now, we are led to a threshold dynamics  algorithm for topology optimization problem (\ref{opt}) for fluids in Stokes flow in the following.

\begin{algorithm}\label{algorithm_main}

Discretize $\Omega$ uniformly into a grid $\Ct_h$ with grid size $h$ and set $M = V_0/h^d$.

\smallskip

{\bf Step 1. Input:} Set $\tau> 0$, $\bar{\alpha}>0$, $k=0$, a tolerance parameter $tol>0$ and give the initial guess $\chi^0 \in \mathcal{B}$.

\smallskip

{\bf Step 2. Iterative solution:}

{\bf 1. Update $\bu$.} Solve the Brinkman flow equations
\[
\begin{cases}
\nabla \cdot \bu = 0, \quad {\rm in} \ \Omega \\
\nabla  p  - \nabla \cdot (\mu \nabla \bu) +  \alpha(\chi^k) \bu  = \bbf,\quad {\rm in} \ \Omega \\
\bu|_{\partial \Omega} = \bu_D
\end{cases}
\]
by mixed finite-element method to get $\bu^k$, where $\alpha(\chi^k) =\bar{\alpha}G_{\tau}* \chi_2^k$.

{\bf 2. Update $\chi$.} Evaluate
\[\begin{cases} \phi_1 =\gamma \sqrt{\frac{\pi}{\tau}}G_\tau * \chi_2^k,\\
\phi_2 =\frac{ \bar{\alpha}}{2}G_{\tau}* |\bu|^2+ \gamma \sqrt{\frac{\pi}{\tau}}G_\tau * \chi_1^k.
\end{cases}\]
Sort the values of ${\phi}_1-{\phi}_2$ in an ascending order,  and set $ \chi_1^{k+1} = 1$ on the first $M$ points and $\chi_2^{k+1} =1$ on the other points.

{\bf 3. }Compute $e_{\chi}^k = \|\chi_1^{k+1}-\chi_1^k\|_2 $. If $e_{\chi}^k\leq tol$, stop the iteration and go to the output step. Otherwise,  let $k+1  \rightarrow k$ and continue the iteration.

\smallskip

{\bf Step 3. Output:} A function $\chi \in \mathcal{B}$ that approximately solves \eqref{opt}.
\label{a:MBO}
\end{algorithm}

%\begin{remark}\label{alg_rem}
%The parameter $\tau$ is determined based on the assumption that the thickness of the transition region between fluid and solid is $O(\sqrt{\tau})$. We can modify Algorithm \ref{a:MBO} into an adaptive algorithm by adjusting $\tau$ during the iterations. {Indeed, we set a threshold value $\tau_t$ and a given tolerance $e_t$, if $e_{\chi}^k \leq e_t$, let $\tau_{\rm new}=\eta \tau$ with $\eta \in (0,1)$ and update $\tau := \tau_{\rm new}$ in the next iteration unless $\tau \leq \tau_t$. Otherwise, $\tau$ will not be updated and the iteration will still continue with the same $\tau$.} We will use this adaptive strategy for the choice of $\tau$ in the numerical experiments.
%\end{remark}
\begin{remark}
We note that in the original MBO method, on one hand, the algorithm can be easily stuck when $\tau$ is very small because, in the discretized space, $\tau$ is so small that no point can switch from one phase to another ($i.e.$, $\chi_1$ changes from 0 to 1 or 1 to 0) at one iteration step. On the other hand, with a large $\tau$, the interface can easily move but creates large error.
Hence, we apply the adaptive in time technique \cite{xu2016efficient} in numerical experiments by modifying Algorithm \ref{a:MBO} into an adaptive algorithm by adjusting $\tau$ during the iterations. {Indeed, we set a threshold value $\tau_t$ and a given tolerance $e_t$, if $e_{\chi}^k \leq e_t$, let $\tau_{\rm new}=\eta \tau$ with $\eta \in (0,1)$ and update $\tau := \tau_{\rm new}$ in the next iteration unless $\tau \leq \tau_t$. Otherwise, $\tau$ will not be updated, and the iteration will continue with the same $\tau$.} We use this adaptive strategy for the choice of $\tau$ in the numerical experiments.
\end{remark}

\section{Stability Analysis}\label{sec:stability}
In this section, we prove the unconditional stability property of the proposed algorithm.  Specifically, for the series of minimizers
\begin{align*}
\bu^0, \chi^1, \bu^1, \chi^2, \cdots, \bu^{k}, \chi^{k+1}, \cdots,
\end{align*}
computed by Algorithm \ref{a:MBO}, we prove
\[J^\tau(\chi^{k+1},\bu^{k+1}) \leq J^\tau(\chi^k,\bu^k)\]
for all $\tau >0$.

We first introduce Lemma \ref{lem:stability2} which leads us to $J^\tau(\chi^{k+1},\bu^k) \leq J^\tau(\chi^k,\bu^k)$ for all $\tau>0$.

\begin{lem}\label{lem:stability2}
For a fixed $\bu^k$, let $\chi^{k+1}$ be the $k+1$-th iteration derived from Algorithm \ref{a:MBO}, we have
\[J^\tau(\chi^{k+1},\bu^k) \leq J^\tau(\chi^k,\bu^k)\] for all $\tau>0$.
\begin{proof}
From the  linearization of  $\tilde{J}^{\tau,k}(\chi^k)$ in \eqref{eq:Linearization}, we have
\begin{align*}
J^\tau(\chi^k,\bu^k) =& \mathcal{L}^{\tau,k}_{\chi^k}(\chi^k) -  \gamma \sqrt{\frac{\pi}{\tau}} \int_{\Omega}\chi_1^k G_\tau * \chi_2^k \dx + \int_{\Omega} \dfrac{\mu}{2} |D \bu^k|^2 -\bu^k \cdot \bbf \dx,\\
J^\tau(\chi^{k+1},\bu^k) = &\mathcal{L}^{\tau,k}_{\chi^k}(\chi^{k+1}) -  \gamma \sqrt{\frac{\pi}{\tau}} \int_{\Omega} \left(\chi_1^{k+1} G_\tau * \chi_2^k +\chi_2^{k+1} G_\tau * \chi_1^k - \chi_1^{k+1} G_\tau * \chi_2^{k+1} \right)  \dx \\
&+ \int_{\Omega} \dfrac{\mu}{2} |D \bu^k|^2  -\bu^k \cdot \bbf\dx \nonumber.
\end{align*}
Then, we calculate
\begin{align*}
 J^\tau(\chi^{k+1},\bu^k) -J^\tau(\chi^k,\bu^k) =& \mathcal{L}^{\tau,k}_{\chi^k}(\chi^{k+1})-  \mathcal{L}^{\tau,k}_{\chi^k}(\chi^k) + \gamma \sqrt{\frac{\pi}{\tau}} \int_{\Omega} (\chi_1^{k+1} -\chi^k_1)G_\tau *(\chi_2^{k+1}- \chi_2^k) \dx \\
 =& \mathcal{L}^{\tau,k}_{\chi^k}(\chi^{k+1})-  \mathcal{L}^{\tau,k}_{\chi^k}(\chi^k) - \gamma \sqrt{\frac{\pi}{\tau}} \int_{\Omega} (\chi_1^{k+1} -\chi^k_1)G_\tau *(\chi_1^{k+1}- \chi_1^k) \dx \\
 =& \mathcal{L}^{\tau,k}_{\chi^k}(\chi^{k+1})-  \mathcal{L}^{\tau,k}_{\chi^k}(\chi^k) - \gamma \sqrt{\frac{\pi}{\tau}} \int_{\Omega}\left( G_{\tau/2}*(\chi_1^{k+1} -\chi^k_1)\right)^2 \dx \\
 \leq & \mathcal{L}^{\tau,k}_{\chi^k}(\chi^{k+1})-  \mathcal{L}^{\tau,k}_{\chi^k}(\chi^k).
\end{align*}
Because we have $ \mathcal{L}^{\tau,k}_{\chi^k}(\chi^{k+1})-  \mathcal{L}^{\tau,k}_{\chi^k}(\chi^k) \leq 0$  from Lemma \ref{lem:stability3}, we are led to
\[J^\tau(\chi^{k+1},\bu^k) -J^\tau(\chi^k,\bu^k) \leq 0\]
for all $\tau>0$.
\end{proof}
\end{lem}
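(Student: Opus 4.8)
The plan is to freeze the velocity at $\bu^k$ and treat $J^\tau(\cdot,\bu^k)=\tilde J^{\tau,k}(\cdot)$ purely as a functional of the indicator pair $\chi$. Since the last group of terms $\int_\Omega(\frac{\mu}{2}|D\bu^k|^2-\bu^k\cdot\bbf)\dx$ in \eqref{eq:objapprox2} does not involve $\chi$, it is a constant that cancels when I compare $\chi^{k+1}$ with $\chi^k$; the only $\chi$-dependent content is the linear penalty $\frac{\bar{\alpha}}{2}\int_\Omega\chi_2\,G_\tau*|\bu^k|^2\dx$ together with the perimeter bilinear form $\gamma\sqrt{\pi/\tau}\int_\Omega\chi_1\,G_\tau*\chi_2\dx$. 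The first is linear in $\chi$, and the second, after substituting $\chi_2=1-\chi_1$, splits into a linear piece plus the concave quadratic $-\gamma\sqrt{\pi/\tau}\int_\Omega\chi_1\,G_\tau*\chi_1\dx$, exactly the second-derivative computation already carried out in Lemma \ref{Lemma}. Hence $\tilde J^{\tau,k}$ is concave along the admissible directions, and this is the structural fact I will exploit.

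Conceptually, concavity makes the scheme a descent for free: for a concave functional the tangent, i.e. the linearization \eqref{eq:Linearization}, lies above the graph, so $\tilde J^{\tau,k}(\chi^{k+1})\le\tilde J^{\tau,k}(\chi^k)+\mathcal{L}^{\tau,k}_{\chi^k}(\chi^{k+1}-\chi^k)$, and since the thresholding step was designed precisely to minimize the linear functional $\mathcal{L}^{\tau,k}_{\chi^k}$ over $\mathcal{B}$, Lemma \ref{lem:stability3} gives $\mathcal{L}^{\tau,k}_{\chi^k}(\chi^{k+1}-\chi^k)\le 0$; the two inequalities chain together to give the claim. To make this a self-contained computation I would instead expand both $J^\tau(\chi^k,\bu^k)$ and $J^\tau(\chi^{k+1},\bu^k)$ directly in terms of $\mathcal{L}^{\tau,k}_{\chi^k}$ plus the leftover perimeter terms, subtract, and regroup. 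Writing $\Delta:=\chi_1^{k+1}-\chi_1^k$ so that $\chi_2^{k+1}-\chi_2^k=-\Delta$, and using symmetry of $G_\tau$, the residual perimeter contribution collapses to the single quadratic $\gamma\sqrt{\pi/\tau}\int_\Omega(\chi_1^{k+1}-\chi_1^k)\,G_\tau*(\chi_2^{k+1}-\chi_2^k)\dx=-\gamma\sqrt{\pi/\tau}\int_\Omega\Delta\,G_\tau*\Delta\dx$. The semigroup factorization $G_\tau=G_{\tau/2}*G_{\tau/2}$ together with self-adjointness then rewrites this as $-\gamma\sqrt{\pi/\tau}\int_\Omega\bigl(G_{\tau/2}*\Delta\bigr)^2\dx\le 0$, whence $J^\tau(\chi^{k+1},\bu^k)-J^\tau(\chi^k,\bu^k)\le\mathcal{L}^{\tau,k}_{\chi^k}(\chi^{k+1})-\mathcal{L}^{\tau,k}_{\chi^k}(\chi^k)\le 0$ by Lemma \ref{lem:stability3}.

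The main obstacle is purely the bookkeeping in the explicit route: I must check that the four cross terms of the bilinear form $\int_\Omega\chi_1\,G_\tau*\chi_2\dx$ recombine exactly into the negative quadratic in $\Delta$, which hinges on the self-adjointness and semigroup property of the Gaussian kernel, the same facts used in the concavity computation of Lemma \ref{Lemma}. There is no analytic subtlety beyond this: because $\bu^k$ is held fixed, no existence or regularity questions arise, and the volume constraint $\int_\Omega\chi_1\dx=V_0$ is exactly what allows Lemma \ref{lem:stability3} to control the linearized step. The one point to handle carefully is the sign bookkeeping arising from $\chi_2^{k+1}-\chi_2^k=-\Delta$, since that is precisely what supplies the favorable minus sign in front of the quadratic.
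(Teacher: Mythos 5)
Your proposal is correct and takes essentially the same route as the paper: your ``self-contained'' computation---expanding $J^\tau(\cdot,\bu^k)$ via the linearization $\mathcal{L}^{\tau,k}_{\chi^k}$, using $\chi_2^{k+1}-\chi_2^k=-(\chi_1^{k+1}-\chi_1^k)$ to collapse the residual to $-\gamma\sqrt{\pi/\tau}\int_\Omega\bigl(G_{\tau/2}*(\chi_1^{k+1}-\chi_1^k)\bigr)^2\dx\leq 0$ by the semigroup and self-adjointness properties of the Gaussian, then invoking Lemma \ref{lem:stability3} for the linear part---is exactly the paper's proof. Your concavity framing is merely a restatement of the same fact, since the functional is exactly quadratic in $\chi$ with negative semidefinite quadratic part, so the tangent-above-graph inequality and the explicit expansion coincide.
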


We are now led to the following theorem which proves the total energy decaying property
\begin{thm}\label{thm:unconditionalstability}
For the series of minimizers
\begin{align*}
\bu^0, \chi^1, \bu^1, \chi^2, \cdots, \bu^{k}, \chi^{k+1}, \cdots,
\end{align*}
calculated with Algorithm \ref{a:MBO}, we have
\begin{align}\label{energydecay}
J^\tau(\chi^{k+1},\bu^{k+1}) \leq J^\tau(\chi^k,\bu^k)
\end{align}
for all $\tau >0$.
\end{thm}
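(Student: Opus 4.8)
The plan is to prove the telescoping inequality \eqref{energydecay} by decomposing each full iteration of Algorithm~\ref{a:MBO} into its two coordinate-descent half-steps and controlling each one separately. Concretely, I would establish the chain
\[
J^\tau(\chi^{k+1},\bu^{k+1}) \leq J^\tau(\chi^{k+1},\bu^k) \leq J^\tau(\chi^k,\bu^k),
\]
so that composing the $\bu$-update bound with the $\chi$-update bound yields the claim.

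For the right-hand inequality (the $\chi$-update half-step) I would simply invoke Lemma~\ref{lem:stability2}, which already gives $J^\tau(\chi^{k+1},\bu^k) \leq J^\tau(\chi^k,\bu^k)$ for all $\tau>0$; no further argument is needed there, since the substantive work (linearization, relaxation to $\mathcal{H}$, and the thresholding estimate) has been absorbed into Lemmas~\ref{Lemma} and \ref{lem:stability3} upstream.

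For the left-hand inequality (the $\bu$-update half-step) I would argue purely by optimality. By construction \eqref{Eq:MinApproEnergySplit1}, $\bu^{k+1}$ is a minimizer of $J^\tau(\chi^{k+1},\cdot)$ over the admissible set $\mathcal{S}$, equivalently the solution of the Brinkman system \eqref{eq:stokes} with coefficient $\alpha(\chi^{k+1})$; recall that $J^\tau(\chi^{k+1},\bu)$ is convex in $\bu$, so this critical point is the global minimizer. It then suffices to check that $\bu^k$ is a feasible competitor, i.e. that $\bu^k\in\mathcal{S}$. This holds because $\bu^k$ solves \eqref{eq:stokes} at the previous step, so in particular $\nabla\cdot\bu^k=0$ in $\Omega$ and $\bu^k|_{\partial\Omega}=\bu_D$, giving $\bu^k\in H^1_{\bu_D}(\Omega,\mathbb{R}^d)$ with vanishing divergence. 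Feasibility immediately yields $J^\tau(\chi^{k+1},\bu^{k+1})=\min_{\bu\in\mathcal{S}}J^\tau(\chi^{k+1},\bu)\leq J^\tau(\chi^{k+1},\bu^k)$.

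The honest assessment is that there is essentially no obstacle remaining at this stage. The only point requiring a moment's care is the feasibility check $\bu^k\in\mathcal{S}$ in the $\bu$-step, because the minimization defining $\bu^{k+1}$ is taken over the constrained set $\mathcal{S}$ rather than over all of $H^1_{\bu_D}(\Omega,\mathbb{R}^d)$; once one observes that the incompressibility and boundary conditions are built into \eqref{eq:stokes}, the two half-step inequalities combine directly to give \eqref{energydecay} for every $\tau>0$.
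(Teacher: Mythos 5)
Your proposal is correct and is essentially identical to the paper's proof: the paper likewise chains $J^\tau(\chi^{k+1},\bu^{k+1}) \leq J^\tau(\chi^{k+1},\bu^k)$, obtained directly from the minimization property \eqref{Eq:MinApproEnergySplit1}, with the inequality $J^\tau(\chi^{k+1},\bu^{k}) \leq J^\tau(\chi^k,\bu^k)$ from Lemma~\ref{lem:stability2}. Your explicit verification that $\bu^k \in \mathcal{S}$ is a feasible competitor is a minor point left implicit in the paper, but it does not constitute a different route.
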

\begin{proof}
For all $\tau>0$, from (\ref{Eq:MinApproEnergySplit1}) , we have
\[J^\tau(\chi^{k+1},\bu^{k+1}) \leq J^\tau(\chi^{k+1},\bu^k).\]
From Lemma \ref{lem:stability2}, we have
\[J^\tau(\chi^{k+1},\bu^{k}) \leq J^\tau(\chi^k,\bu^k).\]
Thus, combining the above together gives  the stability estimate \eqref{energydecay}.
\end{proof}

\begin{remark}
We remark here that, as we proved, the energy is decaying for any given $\tau$. If $\tau$ changes from $\tau_1$ to $\tau_2$ at the $k^{th}$ iteration with $\tau_1>\tau_2$ in our adaptive in time  strategy, for example, $\chi^k$ is generated by $\tau_1$ and $\chi^{k+1}$ is generated by $\tau_2$. The energy is decaying in the sense that $J^{\tau_2}(\chi^{k+1},\bu^{k+1}) \leq J^{\tau_2}(\chi^{k},\bu^k)$ where the energy $J$ at two iterations $\chi^k$ and $\chi^{k+1}$ are approximated by the same $\tau_2$.
\end{remark}

\section{Numerical Implementation} \label{sec:implementation}

In this section, we illustrate the implementation of Algorithm \ref{a:MBO}, with a focus on Step 2. The Brinkman equations (\ref{cos1}-\ref{cos3}) are solved with the mixed finite-element method, and the Taylor-Hood finite-element space is used for discretization, which satisfies the discrete inf-sup condition \cite{GP1986}.
%For the evaluation of the convolution, we use the fast Fourier transform (FFT).

Let $\Ct_h$ be a uniform triangulation of the domain $\Omega$, and $\Cn_h$ is the set of all vertices of $\Ct_h$. For a given $\overline{\chi}_h = (\overline{\chi}^h_1,\overline{\chi}^h_2) \in \mathcal{B}_h$ where $\mathcal{B}_h$ is the discrete version of $\mathcal{B}$ defined on $\Cn_h$.  For the uniform regular triangulation of the domain, all values are evaluated on uniform quad grid points. Thus, we can use FFT for efficient evaluation of the discretized convolutions.

We introduce the Taylor-Hood finite-element space
\begin{align*}
\bfV_h & :  = \{ \bv\in H^1(\Omega,\mathbb{R}^d)  \ | \ \bv|_K \in [P_2(K)]^d, \ K \in \Ct_h \},\\
Q_h & : = \{ q \in L^2(\Omega,\mathbb{R}) \ | \ \int_\Omega q \ \dx = 0, \  q|_{K} \in P_1(K), \ K \in \Ct_h \}.
\end{align*}
Let $\bfV^{D}_h : = \{  \bv \in \bfV_h \ | \  \bv|_{\partial \Omega} =  \bu^h_D \}$, where $\bu^h_D $ is the a suitable approximation of the Dirichlet boundary condition $\bu_D$ on the boundary edges/faces of $\Ct_h$. For the solution of (\ref{cos1}-\ref{cos3}), find $(\bu_h,p_h) \in \bfV^{D}_h \times Q_h$ such that
\begin{align*}
- (p_h,\nabla \cdot \bv_h) + (\mu \nabla \bu_h, \nabla \bv_h) +( \alpha(\overline{\chi}_h) \bu_h,\bv_h) & = (\bbf,\bv_h), \quad \forall \ \bv_h  \ \in \bfV^0_h , \\
(\nabla\cdot\bu_h,q_h) & = 0, \quad \quad \quad  \ \forall \  q_h \in Q_h. \\
\end{align*}
The above bilinear form can be easily extended to the Brinkman equations both with Dirichlet boundary $\Gamma_D$ and Neumann boundary $\Gamma_N$, where $\Gamma_D \cap \Gamma_N = \emptyset, \Gamma_D \cup \Gamma_N = \partial \Omega$, and $(\mu \nabla \bu - p \bfI)\cdot \bfn |_{\Gamma_N} = \bfg$.

When $\bu_h$ is obtained, we proceed to use the FFT to evaluate $(\phi^h_1,\phi^h_2)$ on each node of $\Cn_h$ as follows:
\[\begin{cases} \phi^h_1 =\gamma \sqrt{\frac{\pi}{\tau}}G_\tau * \overline{\chi}_2^h,\\
\phi^h_2 =\frac{ \bar{\alpha}}{2}G_{\tau}* |\bu_h|^2+ \gamma \sqrt{\frac{\pi}{\tau}}G_\tau * \overline{\chi}_1^h.
\end{cases}\]

Following Algorithm \ref{a:MBO}, we can now use $(\phi^h_1,\phi^h_2)$ to update the indicator function $\chi_h$ by the approach stated in Algorithm \ref{a:MBO}.

\section{Numerical experiments}\label{sec:example}
In this section, we perform extensive numerical testing to demonstrate the efficiency of Algorithm \ref{a:MBO} with an adaptive strategy for the choice of $\tau$. We choose $\eta=0.5$ in the update of $\tau$. If no confusion is possible, we still denote by $\tau$ as its initialization in the following.

\subsection{Two dimensional results}
We firstly test the proposed algorithm for the two dimensional problems. For most of examples in this subsection, we assume that the Dirichlet boundary condition with a parabolic profile and the magnitude of the velocity is set as $|\bu_D| = \overline{g}(1-(2t/l)^2)$ with $t \in [-l/2,l/2]$, where $l$ is the length of the section of the boundary at which the inflow/outflow velocity is imposed. The direction of the inflow/outflow velocity is illustrated in the following examples.
\subsection*{Example 6.1}
The first example shown in Figure \ref{ex1f1} is the optimal design of a diffuser that was tested for topology optimization for fluids using MMA in \cite{Borrvall2003}. Here, we apply Algorithm \ref{algorithm_main} to obtain the optimal design of the diffuser. Let $\overline{g} = 1$ and $3$ for the inflow and outflow velocities, respectively. We set the fluid region fraction as $\beta=0.5$ and test the problem on a $128\times 128$ grid.

\begin{figure}[htbp]
\begin{center}
\includegraphics[width=9cm,height=5.4cm,clip,trim=0cm 6cm 0cm 6cm]{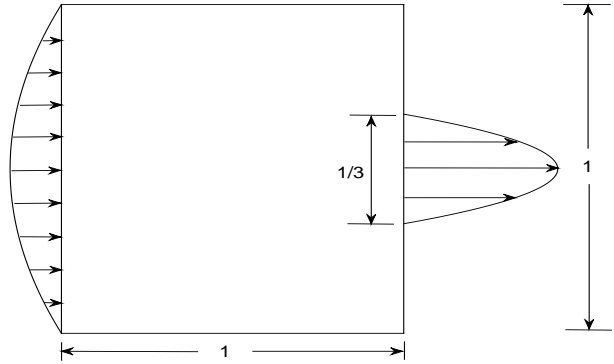}
\end{center}
\caption{\footnotesize (Example 6.1) Design domain for the diffuser example.}\label{ex1f1}
\end{figure}

\begin{figure}[htbp]
\begin{center}
\includegraphics[width=7cm,height=5.4cm,clip,trim=1cm 1cm 1cm 0cm]{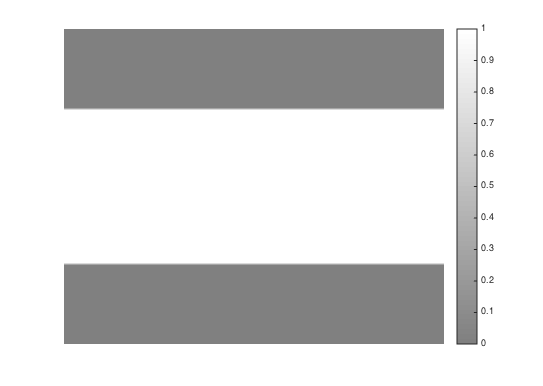}
\includegraphics[width=7cm,height=5.4cm,clip,trim=1cm 1cm 1cm 0cm]{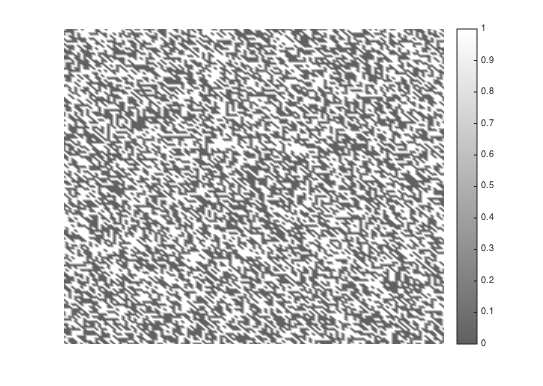}
\end{center}
\caption{\footnotesize (Example 6.1) Left (Case 1): Initial distribution of $\chi_1$. Right (Case 2): Initial distribution of $\chi_1$. }\label{ex1f2_dis}
\end{figure}

\begin{figure}[htbp]
\begin{center}
\includegraphics[width=7cm,height=5.4cm,clip,trim=2cm 1cm 1cm 1cm]{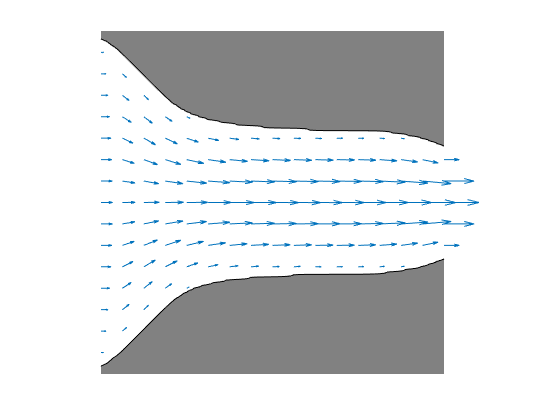}
\includegraphics[width=7cm,height=5.4cm,clip,trim=0.5cm 0cm 0cm 0.5cm]{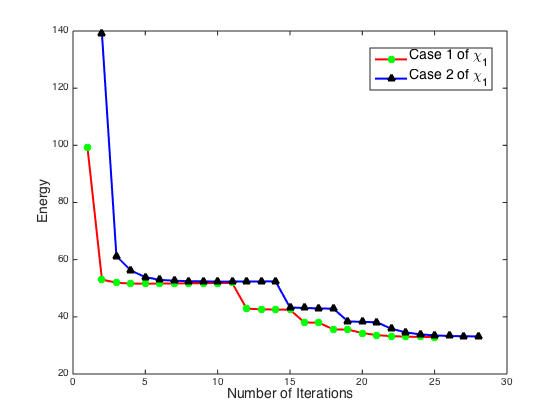}
\end{center}
\caption{\footnotesize (Example 6.1) Left: Optimal diffuser for the case $\bar{\alpha}=2.5\times10^4$ and the approximate velocity in the fluid region. Right: Plot of energy curves for two cases of distribution of $\chi_1$. In this case, the parameters are set as $\bar{\alpha}=2.5\times10^4$, $\tau = 0.01$, $\gamma = 0.1$.}\label{ex1f2_energ_vel_0}
\end{figure}

We first perform the simulations  with $\bar{\alpha}=2.5\times10^4$,  $\tau=0.01$,  $\gamma=0.1$ and with two types of initial distribution of $\chi_1$, as shown in Figure \ref{ex1f2_dis}; that is, the initial fluid region is restricted in the middle of the domain in the left graph of Figure \ref{ex1f2_dis} (Case 1), and the initial fluid region satisfies a random distribution in the right graph of Figure \ref{ex1f2_dis} (Case 2).   In both cases,  we always arrive at the same optimal design result shown in the left graph of Figure \ref{ex1f2_energ_vel_0}, which also shows the quiver plot of the approximate velocity in the fluid region. The optimal design result seems similar to the result obtained by MMA in \cite{Borrvall2003}. The energy decaying property can be observed in the right graph of Figure \ref{ex1f2_energ_vel_0} which shows the energy curves for the above two cases of the initial distribution of $\chi_1$.  The iteration converges in about $25$ steps in both cases.

\begin{figure}[htbp]
\begin{center}
\includegraphics[width=7cm,height=5.4cm,clip,trim=0cm 0cm 0cm 0.5cm]{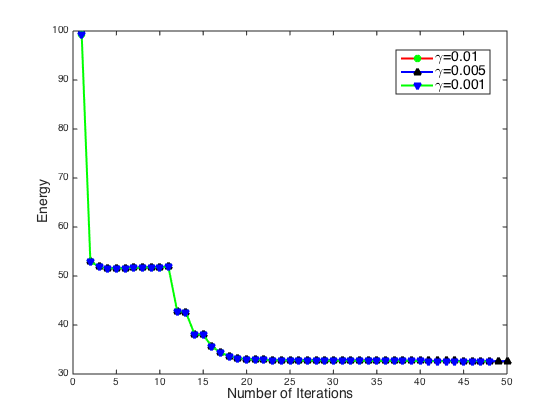}
\includegraphics[width=7cm,height=5.4cm,clip,trim=0cm 0cm 0cm 0.5cm]{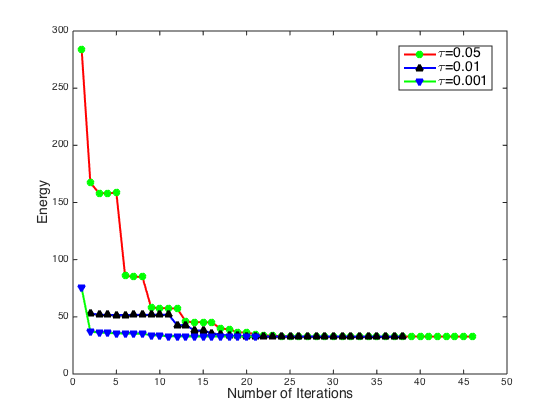}
\end{center}
\caption{\footnotesize (Example 6.1) Plot of energy curves for case 1 of distribution of $\chi_1$ with $\bar{\alpha}=2.5\times10^4$. Left: For fixed $\tau=0.01$, energy curves for the cases of $\gamma = 0.01,0.005,0.001$. Right: For fixed $\gamma=0.01$, energy curves for the cases of $\tau = 0.05,0.01,0.001$.}\label{ex1f2_diff_pram}
\end{figure}

Next, we test the case (initial fluid region of Case 1)  for various parameters.   We first fix  $\bar{\alpha}=2.5\times10^4$,  $\tau = 0.01$ and vary $\gamma = 0.01,0.005,0.001$.  We then test the cases for fixed $\gamma=0.001$ and various choices of $\tau = 0.05,0.01,0.001$. The optimal design of the diffuser is similar to the result in the left graph of Figure \ref{ex1f2_energ_vel_0}. Figure \ref{ex1f2_diff_pram} shows the energy decaying property for each of these cases.  In all cases, the iteration converges in fewer than $25$ steps.

\begin{figure}[htbp]
\begin{center}
\includegraphics[width=7cm,height=5.4cm,clip,trim=2cm 1cm 1cm 1cm]{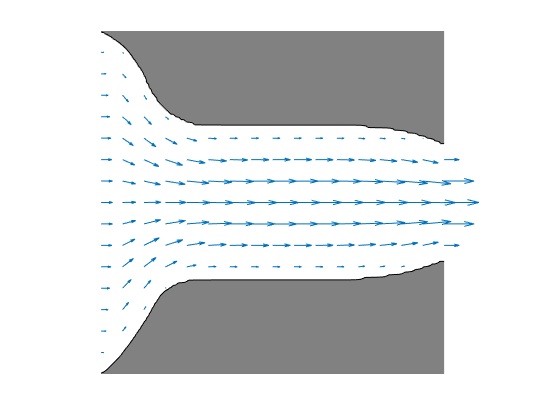}
\includegraphics[width=7cm,height=5.4cm,clip,trim=0.5cm 0cm 0cm 0.5cm]{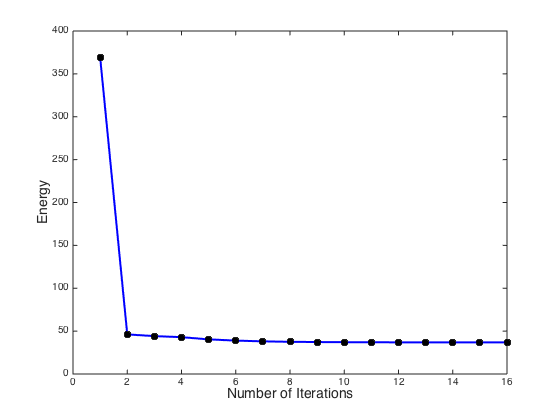}
\end{center}
\caption{\footnotesize (Example 6.1) Left: Associated optimal diffuser and approximate velocity in the fluid region. Right: Plot of energy curve for Case 1 of distribution of $\chi_1$. In this case, the parameters are set as $\bar{\alpha}=2.5\times10^5$, $\tau = 0.001$, $\gamma = 0.01$.}\label{ex1f2_energ_vel}
\end{figure}

In the next example,  we increase  $\bar{\alpha}=2.5\times10^5$.  Again, we use  the initial fluid region of Case 1 with  $\tau = 0.001,\gamma=0.01$. The optimal design of the diffuser and the approximate velocity in the fluid region are shown in the left graph of Figure \ref{ex1f2_energ_vel}.  It seems that the fluid region at the left boundary reaches top and bottom boundaries in this case.   The energy decaying property is also observed in  Figure \ref{ex1f2_energ_vel}.  The iteration converges even more quickly at about $10$ steps.

We also test the problem with the same inflow Dirichlet boundary condition as above, but we replace the outflow Dirichlet boundary condition with a homogeneous Neumann boundary. A similar optimal design of diffuser is then obtained as above for the cases of $\bar{\alpha}=2.5\times10^4$ and $\bar{\alpha}=2.5\times10^5$.

\subsection*{Example 6.2}
In this example, we test the double pipes problem shown in Figure \ref{ex2f1}. The inflow and outflow Dirichlet boundaries are located with centers $[0, 1/4], [0, 3/4], [1, 1/4], [1, 3/4]$, as shown in Figure \ref{ex2f1}. Let $\overline{g} =1 $ for the inflow and outflow velocities, respectively, and let the fluid region fraction be $\beta=1/3$. We test the problem with $\bar{\alpha}=2.5\times10^4$ on a $128\times 256$  grid for $d=0.5$ and on a $192\times 128$  grid  for $d=1.5$.

\begin{figure}[htbp]
\begin{center}
\includegraphics[width=8cm,height=5.4cm,clip,trim=0cm 0cm 0cm 0cm]{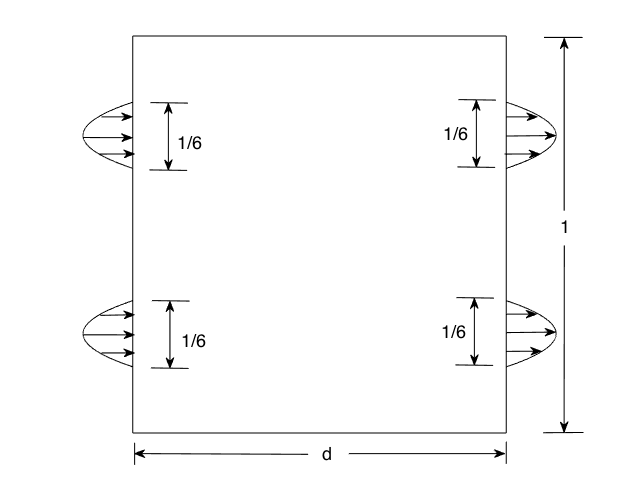}
\end{center}
\caption{\footnotesize (Example 6.2) Design domain for the double pipes example.}\label{ex2f1}
\end{figure}

\begin{figure}[htbp]
\begin{center}
\includegraphics[width=4cm,height=5.4cm,clip,trim=1.5cm 1cm 1.5cm 0cm]{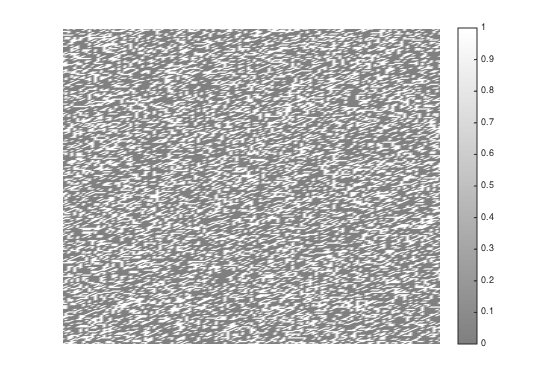}
\includegraphics[width=4.5cm,height=5.4cm,clip,trim=6cm 1cm 6cm 0cm]{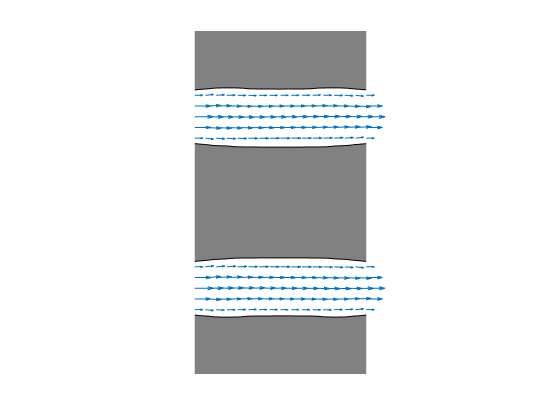}
\includegraphics[width=6cm,height=5.4cm,clip,trim=1cm 0cm 1cm 0cm]{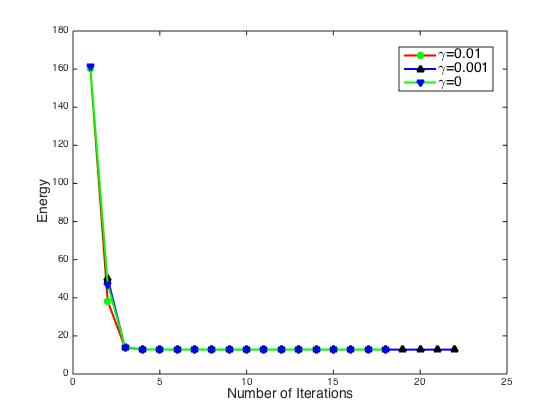}
\end{center}
\caption{\footnotesize (Example 6.2) For the case $d=0.5$. Left: Initial distribution of $\chi_1$. Middle: Optimal double pipes and approximate velocity in the fluid region. Right: For fixed $\tau=0.001$, energy curves for the cases of $\gamma=0.01,0.001,0$.}\label{ex2f2_1}
\end{figure}

For the case $d=0.5$, we choose a random initial distribution $\chi_1$, as shown in the left graph of Figure \ref{ex2f2_1}. We remark that $\gamma$ can also be set to zero in Algorithm \ref{a:MBO}. For fixed $\tau=0.001$, we test $\gamma=0.01,0.001,0$. The optimal design result is nearly the same for the three choices of $\gamma$, as shown in the middle graph of Figure \ref{ex2f2_1}, and the energy decaying property is observed from the energy curves in the right graph of Figure \ref{ex2f2_1}.

\begin{figure}[htbp]
\begin{center}
\includegraphics[width=8cm,height=5.4cm,clip,trim=1cm 2cm 1cm 1.5cm]{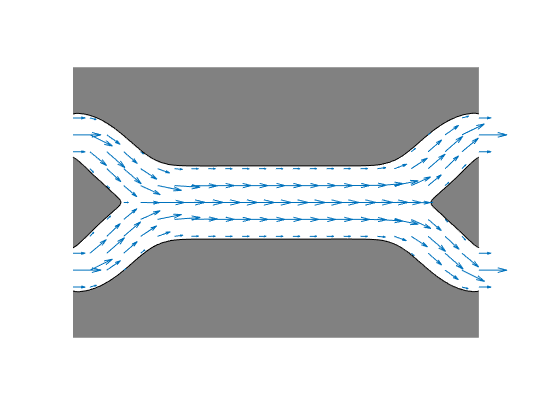}
\includegraphics[width=7cm,height=5.4cm,clip,trim=1cm 0cm 0cm 0cm]{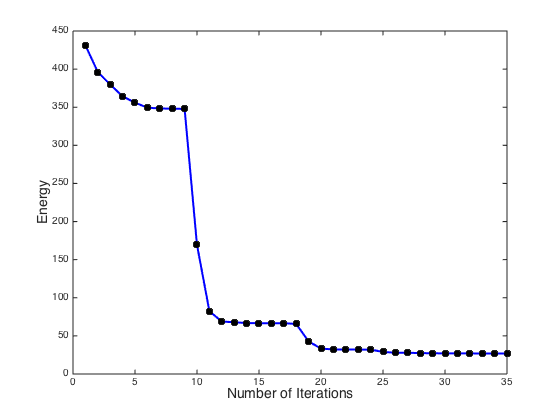}
\end{center}
\caption{\footnotesize (Example 6.2) For the case $d=1.5$, the parameters are set as $\tau=0.01$ and $\gamma=0.0001$. Left: Optimal double pipes and approximate velocity in fluid region. Right: Energy curve.}\label{ex2f2_2}
\end{figure}

For the case $d=1.5$, we choose an initial distribution $\chi_1$ with the fluid region located in the middle of the domain as Case 1 of Example 6.1. We set $\tau=0.01$ and $\gamma=0.0001$. The optimal design result and the approximate velocity are shown in the left graph of Figure \ref{ex2f2_2}, and the energy decaying property is also observed from the energy curve in the right graph of Figure \ref{ex2f2_2}. Compared with the computational cost used by MMA in \cite{Borrvall2003}, we find that our algorithm converges more quickly to the optimal result (cf. Table \ref{table2}).

\subsection*{Example 6.3}
We consider another  example studied  in \cite{Borrvall2003} that  includes a body fluid force term imposed in the local circular region with center $[1/2,1/3]$ and radius $r=1/12$. We show the design domain in Figure \ref{ex3f1}. The inflow and outflow Dirichlet boundaries are located with centers $[0,2/3]$ and $[1,2/3]$ respectively. Let $\overline{g} =1 $ for the inflow and outflow velocities, and let the fluid region fraction be $\beta=1/4$. We test the problem with various choices of body fluid force on a $128 \times 128$ grid, and we always choose $\bar{\alpha}=2.5\times10^4$, $\tau=0.01$, $\gamma=0.0001$ in this example.

\begin{figure}[htbp]
\begin{center}
\includegraphics[width=8.5cm,height=5.4cm,clip,trim=0cm 0cm 0cm 1cm]{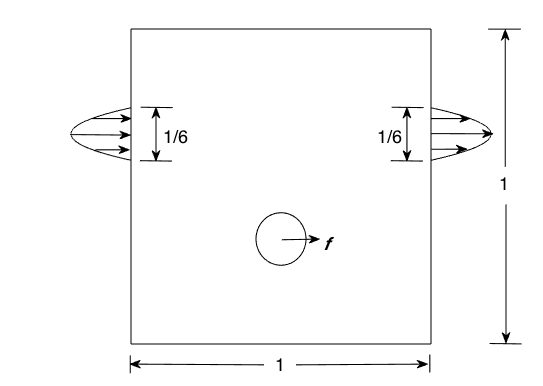}
\end{center}
\caption{\footnotesize (Example 6.3) Design domain for the example with a force term.}\label{ex3f1}
\end{figure}

\begin{figure}[htbp]
\begin{center}
\includegraphics[width=7cm,height=5.4cm,clip,trim=3cm 2cm 2cm 1cm]{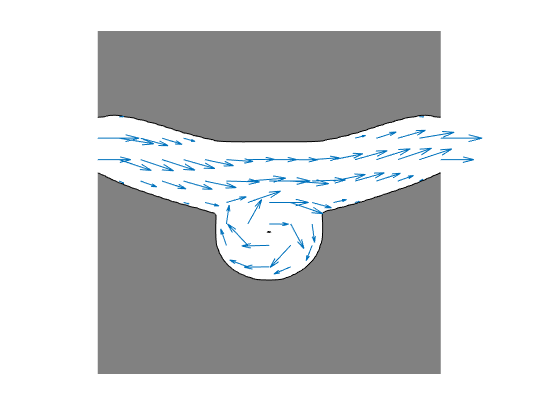}
\includegraphics[width=7cm,height=5.4cm,clip,trim=0cm 0.5cm 1cm 0.5cm]{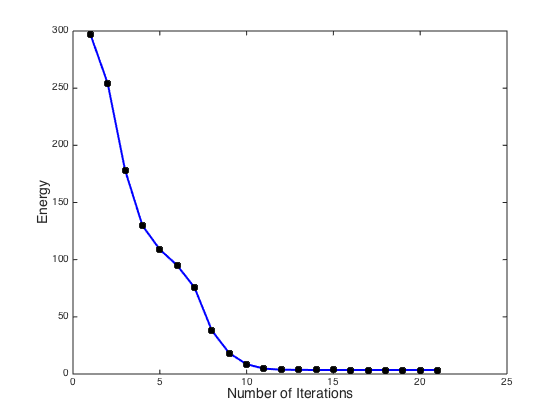}
\end{center}
\caption{\footnotesize (Example 6.3) For the example with a force term $\bff = [-1125,0]$ on a grid $128\times 128$. Left: Optimal design result and approximate velocity in the fluid region. Right: Energy curve.}\label{ex3f2_1}
\end{figure}

\begin{figure}[htbp]
\begin{center}
\includegraphics[width=7cm,height=5.4cm,clip,trim=3cm 2cm 2cm 1cm]{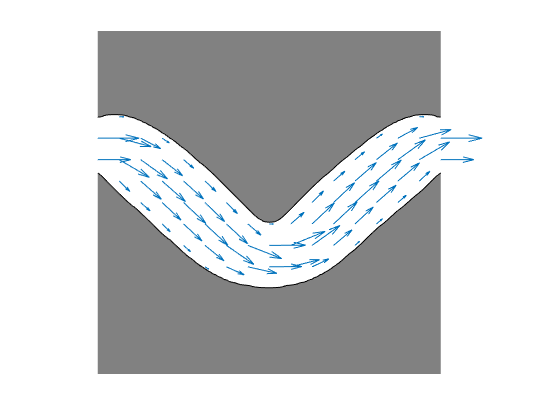}
\includegraphics[width=7cm,height=5.4cm,clip,trim=0cm 0.5cm 1cm 0.5cm]{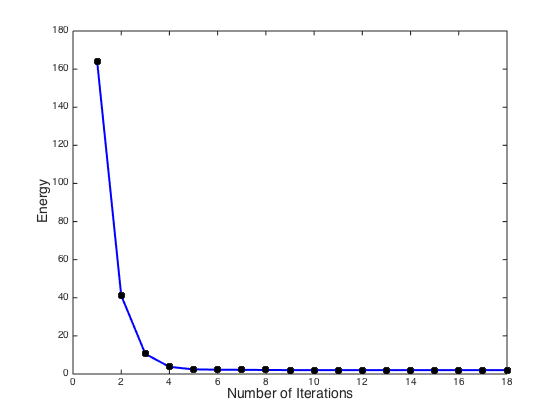}
\end{center}
\caption{\footnotesize (Example 6.3) For the example with a force term $ [562.5,0]$ on a grid $128\times 128$. Left: Optimal design result and approximate velocity in the fluid region. Right: Energy curve.}\label{ex3f2_2}
\end{figure}

\begin{figure}[htbp]
\begin{center}
\includegraphics[width=7cm,height=5.4cm,clip,trim=3cm 2cm 2cm 1cm]{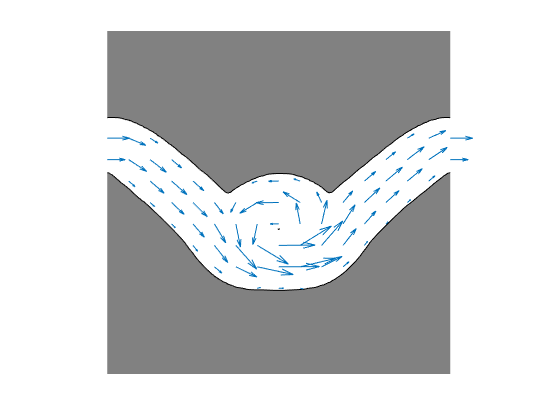}
\includegraphics[width=7cm,height=5.4cm,clip,trim=0cm 0.5cm 1cm 0.5cm]{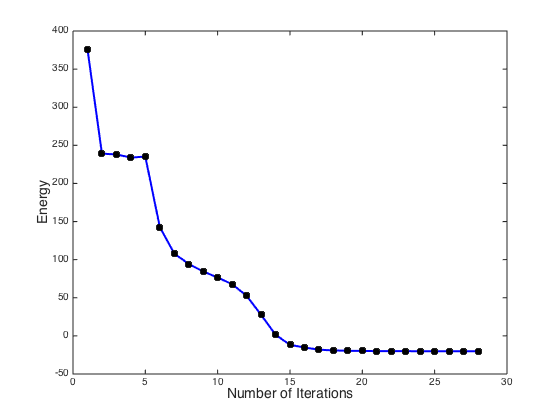}
\end{center}
\caption{\footnotesize (Example 6.3) For the example with a force term $ [1687.5,0]$ on a grid $128\times 128$. Left: Optimal design result and approximate velocity in the fluid region. Right: Energy curve.}\label{ex3f2_3}
\end{figure}

We test the cases for three different force terms $\bbf = [-1125,0],  [562.5,0], [1687.5,0]$. We choose the initial distribution $\chi_1$ with the fluid region located in a circular region with center $[1/2,1/2]$ and radius $1/\sqrt{3\pi}$. The optimal results and energy curves are plotted in Figures \ref{ex3f2_1} to \ref{ex3f2_3}  for various values for force $\bbf$, and the new algorithm also converges more quickly to the optimal results than the MMA shown in \cite{Borrvall2003}. One can observe that  for  $\bbf = [-1125,0]$  the fluid flow is  in a clockwise direction near the center roundabout (left graphs in Figure \ref{ex3f2_1}), while for $\bbf = [1687.5,0]$   it  is in a counterclockwise direction (left graph of  Figure \ref{ex3f2_3}).

\begin{figure}[htbp]
\begin{center}
\includegraphics[width=7cm,height=5.4cm,clip,trim=3cm 2cm 2cm 1cm]{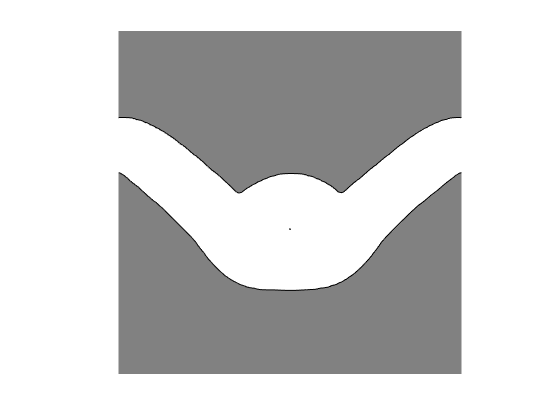}
\includegraphics[width=7cm,height=5.4cm,clip,trim=3cm 2cm 1cm 1cm]{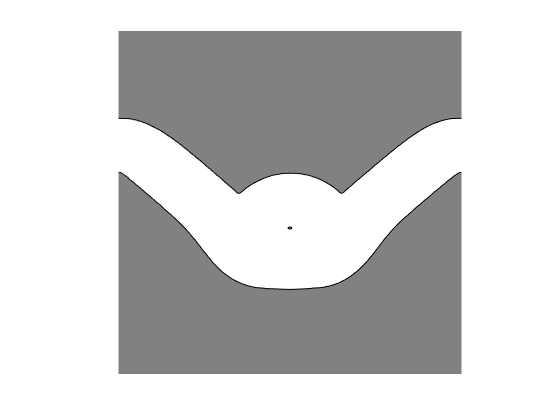}
\end{center}
\caption{\footnotesize (Example 6.3) Optimal design results for for example with force term $\bff = [1687.5,0] $. Left: Optimal design result on a coarse grid $128\times 128$. Right: Optimal design result on a fine grid $256\times 256$.}\label{ex3f3}
\end{figure}

An interesting phenomenon observed in this example was the appearance of a tiny local solid at the center of the roundabout for the two cases of $\bbf = [-1125,0], [1687.5,0]$, and the tiny local solid is clearer when the grid is finer (cf. Figure \ref{ex3f3}).

\subsection*{Example 6.4}
Finally, we consider optimal design for a three-terminal device shown in Figure \ref{ex4f1}. The inflow and outflow Dirichlet boundaries are located with centers $[0,0.3]$ and $[1,0.7]$, and the homogeneous Neumann boundary is located on the left boundary with center $[0,1.1]$. Let $\overline{g} =0.5$ for the inflow velocity and the fluid region fraction be $\beta=0.3$. We choose $\bar{\alpha}=2.5\times10^4$, $\tau=0.01$, $\gamma=0.0001$ in this example and test the problem on a grid $80\times 112$.

\begin{figure}[htbp]
\begin{center}
\includegraphics[width=8cm,height=6.2cm,clip,trim=0cm 0cm 0cm 1cm]{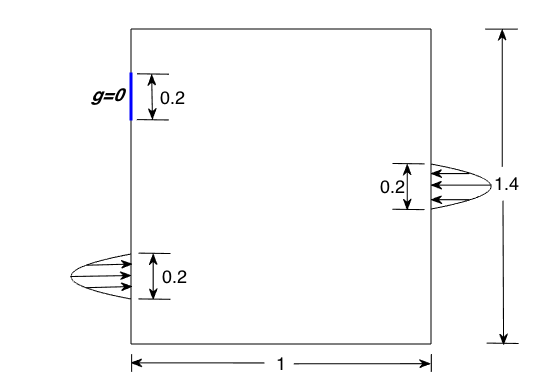}
\end{center}
\caption{\footnotesize (Example 6.4) Design domain for the example with a force term.}\label{ex4f1}
\end{figure}

\begin{figure}[htbp]
\begin{center}
\includegraphics[width=7.5cm,height=6cm,clip,trim=3cm 1cm 3cm 1cm]{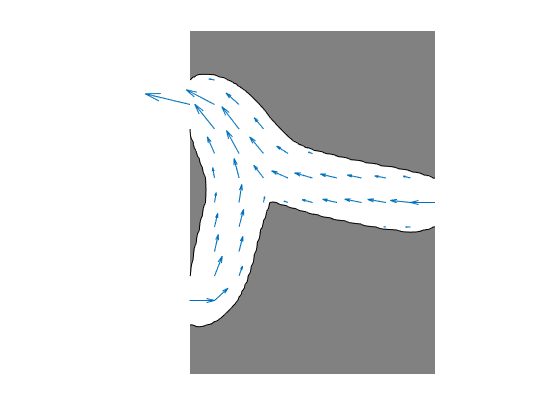}
\includegraphics[width=7.5cm,height=6cm,clip,trim=0cm 0cm 0cm 0.5cm]{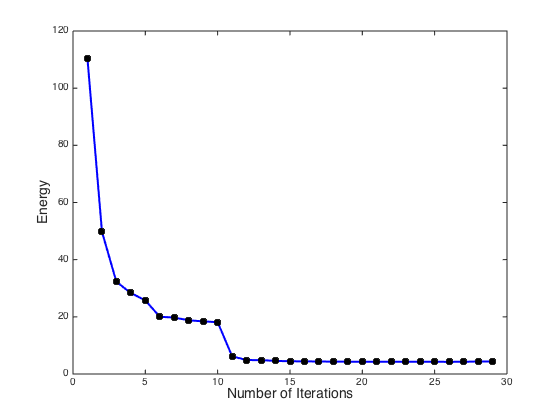}
\end{center}
\caption{\footnotesize (Example 6.4) Left: Optimal design result for example of three-terminal device and approximate velocity in the fluid region. Right: Energy curve.}\label{ex4f2}
\end{figure}

We choose the initial distribution $\chi_1$, with the  fluid region located in double parallel pipes $[0,1]\times[13/60,23/60] \cup [0,1]\times [37/60,47/60]$. The optimal result was obtained after 29 iterations. The optimal design result and the approximate velocity are shown in the left graph of Figure \ref{ex4f2}. The energy decaying property is also observed from the energy curve in the right graph of Figure \ref{ex4f2}.

\subsection{Three dimensional results}
We now present the  numerical examples in three dimensions. For the Dirichlet boundary condition in the following examples, we always assume that the magnitude of the velocity is set as
\begin{align*}
|\bu_D|=\bar{g}\Big(1-\frac{(y-a)^2+(z-b)^2}{l^2}\Big),
\end{align*}
where $\bar{g}$ is the prescribed velocity at the center of the flow profile at which the inflow/outflow velocity is imposed, $l$ is the radius of the flow profile, $(y,z)$ are Cartesian coordinates on a $x$-plane, and $(a,b)$ are the center of a circle on a $x$-plane.
\subsection*{Example 6.5.} The design domain of this example is shown in Figure \ref{ex5f1}. For the inflow, we let $\bar{g}=1$, $l=\frac{1}{2}$, and $(a,b)=(\frac{1}{2},\frac{1}{2})$ on $x=0$ plane. For the objective of mass conservation, we let $\bar{g}=9$, $l=\frac{1}{6}$, and $(a,b)=(\frac{1}{2},\frac{1}{2})$ on $x=1$ plane. We set the fluid region fraction is $\beta=0.35$. This example was already tested by the level set method in \cite{Challis2009}. Here we apply our new Algorithm \ref{algorithm_main} to obtain the optimal diffuser. Throughout this example, we choose the initial distribution $\chi_1$ with fluid domain in a region of $\{(x,y,z): x \in (0,1), y \in (0,1), z \in (\frac{7}{20},\frac{7}{10})\}$.

\begin{figure}[htbp]
\begin{center}
\includegraphics[width=7.5cm,height=6cm,clip,trim=0cm 0cm 0cm 1cm]{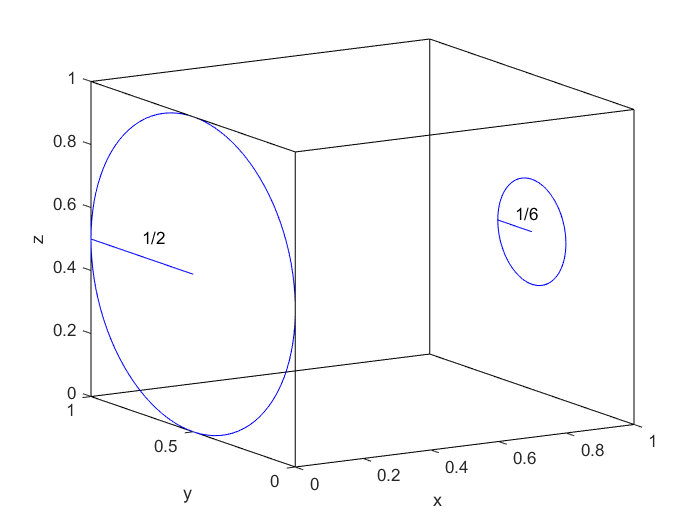}
\end{center}
\caption{\footnotesize (Example 6.5) Design domain.}\label{ex5f1}
\end{figure}

\begin{figure}[htbp]
\begin{center}
\includegraphics[width=8cm,height=6cm,clip,trim=0cm 0cm 0.5cm 1cm]{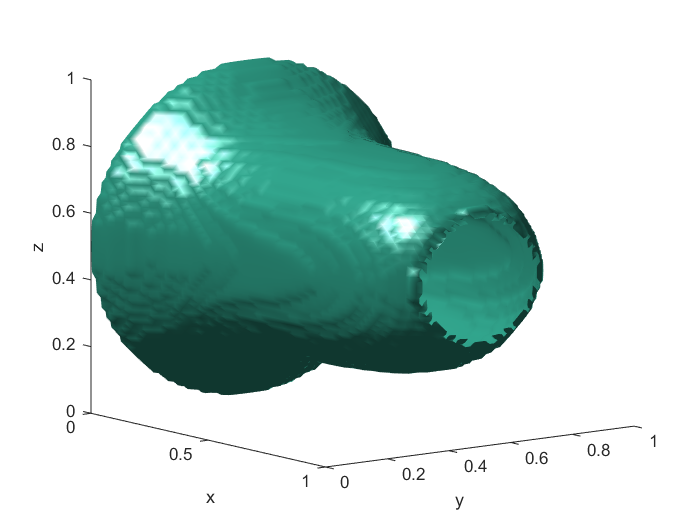}
\includegraphics[width=6.5cm,height=5.5cm,clip,trim=0cm 0cm 0cm 0.5cm]{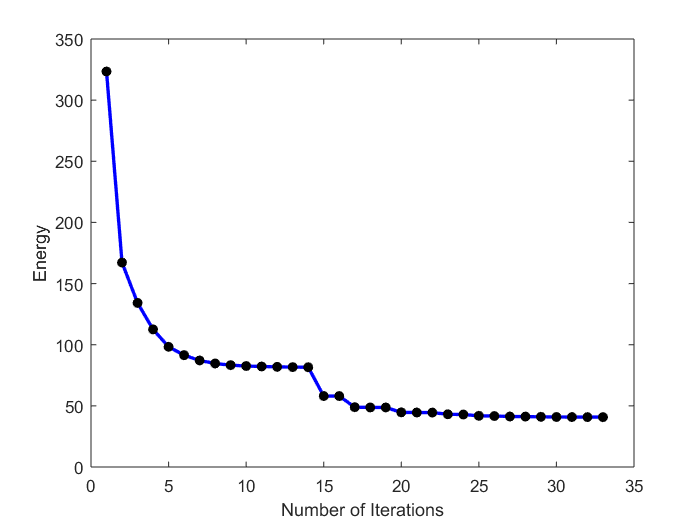}
\end{center}
\caption{\footnotesize (Example 6.5) Left: Optimal design result on a $32\times32\times32$ grid. Right: Energy curve.
In this case the parameters are set as $\bar{\alpha}=2.5\times10^4$, $\tau=0.05$, $\gamma=0.01$.}\label{ex5f2}
\end{figure}

\begin{figure}[htbp]
\begin{center}
\includegraphics[width=8cm,height=6cm,clip,trim=0cm 0cm 0.5cm 1cm]{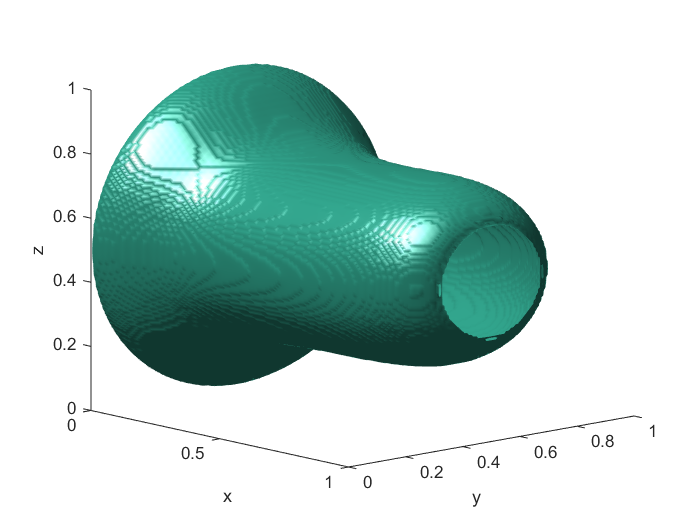}
\includegraphics[width=6.5cm,height=5.5cm,clip,trim=0cm 0cm 0cm 0.5cm]{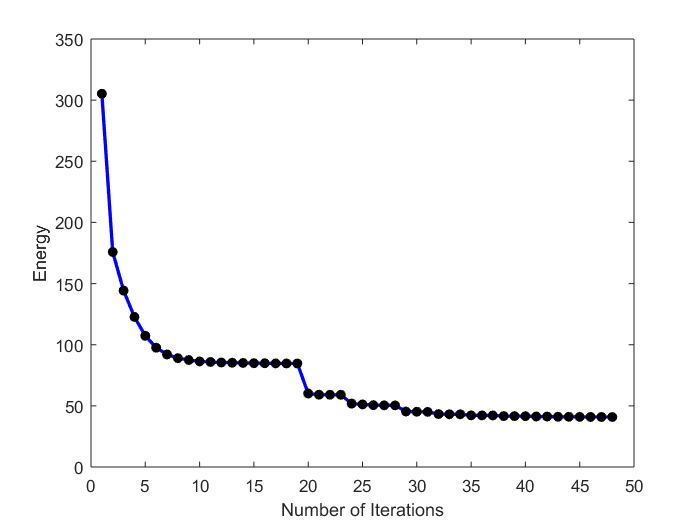}
\end{center}
\caption{\footnotesize (Example 6.5) Left: Optimal design result on a $64\times64\times64$ grid. Right: Energy curve.
In this case the parameters are set as $\bar{\alpha}=2.5\times10^4$, $\tau=0.05$, $\gamma=0.01$.}\label{ex5f3}
\end{figure}

Firstly, we test the case with $\bar{\alpha}=2.5\times10^4$, $\tau=0.05$, and $\gamma=0.01$ on $32\times32\times32$ and $64\times64\times64$ grids. In the following, the interface between solid and fluid regions for the optimal design is shown, and the fluid region locates in the interior of subdomain surrounded by the interface. The optimal diffusers are presented in the left graphs of Figure \ref{ex5f2} and Figure \ref{ex5f3} and the energy decaying property can be observed in the right graphs of Figure \ref{ex5f2} and Figure \ref{ex5f3}. The optimal design results seem to be similar to that in \cite{Challis2009}. The iteration converges in about 25 steps and 35 steps on coarse and fine grids respectively. Additionally, the slice of optimal design result at $y=0.5$ on $32\times32\times32$ grid and the approximate velocity in the fluid domain are provided in Figure \ref{ex5f5}.

\begin{figure}[htbp]
\begin{center}
\includegraphics[width=9cm,height=6.5cm,clip,trim=0cm 0cm 0cm 0cm]{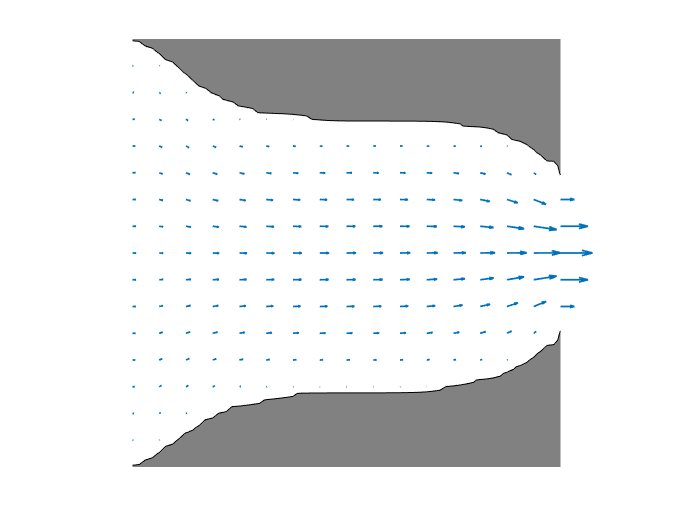}
\end{center}
\caption{\footnotesize (Example 6.5) The slice of optimal design result and the approximate velocity in fluid region at $y=0.5$ on a $32\times32\times32$ grid. The parameters are set as $\bar{\alpha}=2.5\times10^4$, $\tau=0.05$, $\gamma=0.01$.}\label{ex5f5}
\end{figure}

Next, the energy decay properties of the Algorithm \ref{algorithm_main} with different parameters $\tau$ and $\gamma$ for this problem are shown for the same case of $\bar{\alpha}=2.5\times10^4$ in Figure \ref{ex5f4}. We note that the optimal design results for different parameters $\tau$ and $\gamma$ are similar to that in the left graphs of Figure \ref{ex5f2} and Figure \ref{ex5f3}. From the two graphs of Figure \ref{ex5f4}, we find that the energy converges to almost the same value when $\tau$ or $\gamma$ is fixed.

\begin{figure}[htbp]
\begin{center}
\includegraphics[width=7cm,height=5.4cm,clip,trim=0cm 0cm 0cm 0.5cm]{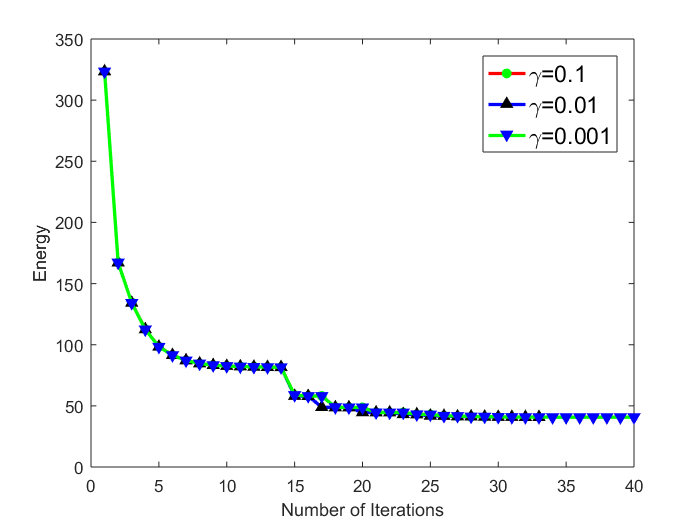}
\includegraphics[width=7cm,height=5.4cm,clip,trim=0cm 0cm 0cm 0.5cm]{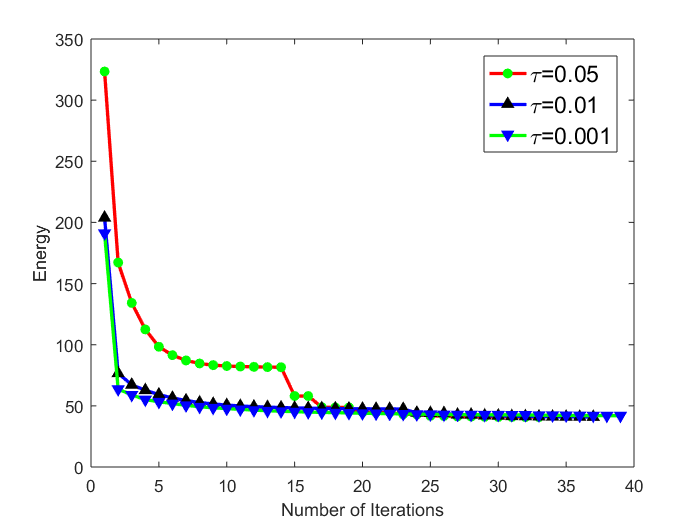}
\end{center}
\caption{\footnotesize (Example 6.5) Plot of energy curves for $\bar{\alpha}=2.5\times10^4$ on $32\times32\times32$ grid. Left: For fixed $\tau=0.05$, energy curves for the cases of $\gamma = 0.1,0.01,0.001$. Right: For fixed $\gamma=0.01$, energy curves for the cases of $\tau = 0.05,0.01,0.001$.}\label{ex5f4}
\end{figure}

\subsection*{Example 6.6.} In this example we assume that there are four flow profiles on the inflow boundary and one flow profile on the outflow boundary. The design domain is shown in Figure \ref{ex6f1}. For the four inflow profiles, we let $\bar{g}=1$, the radius is set as $l=\frac{1}{8}$ and the centers of circles are $(\frac{1}{4},\frac{1}{4})$, $(\frac{1}{4},\frac{3}{4})$, $(\frac{3}{4},\frac{1}{4})$ and $(\frac{3}{4},\frac{3}{4})$ on the $x=0$ plane respectively. For the outflow profile, we let $\bar{g}=1$, $l=\frac{1}{4}$ and $(a,b)=(\frac{1}{2},\frac{1}{2})$ on the $x=1$ plane. We set the fluid region fraction as $\beta=\frac{1}{4}$.

\begin{figure}[htbp]
\begin{center}
\includegraphics[width=7.5cm,height=6cm,clip,trim=0cm 0cm 0cm 1cm]{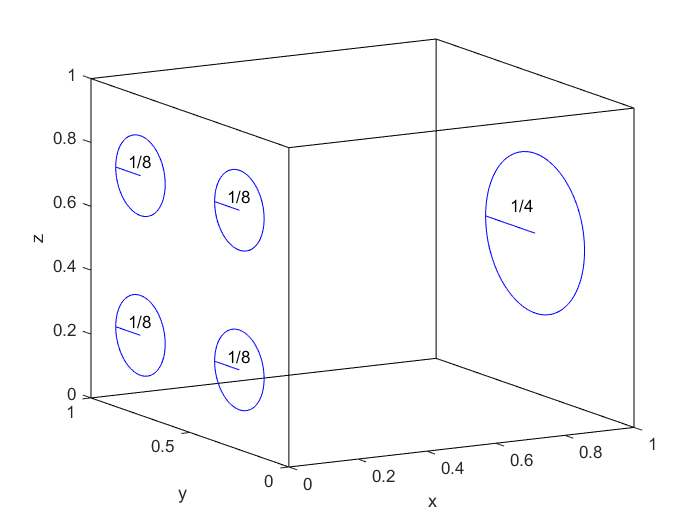}
\end{center}
\caption{\footnotesize (Example 6.6) Design domain.}\label{ex6f1}
\end{figure}

We test this problem based on the Algorithm \ref{algorithm_main} with $\bar{\alpha}=2.5\times10^4$, $\tau=0.05$, and $\gamma=0.01$ on $32\times32\times32$ and $64\times64\times64$ grids. The initial distribution $\chi_1$ with fluid domain is located in a region of $\{(x,y,z): x\in (0,1),y\in(0,1),z \in (\frac{1}{2},\frac{3}{4})\}$. The corresponding optimal design result is shown in the left graphs of Figure \ref{ex6f2} and Figure \ref{ex6f3}. From the left graphs of Figure \ref{ex6f2} and Figure \ref{ex6f3}, we can see that the interface between solid and fluid regions is more smooth when the simulation is performed on the fine grid. From the right graphs of Figure \ref{ex6f2} and Figure \ref{ex6f3}, the energy decaying property is also observed. The iteration converges in about 50 steps and 70 steps on coarse and fine grids respectively. In Figure \ref{ex6f4}, we present the slice of optimal design result at $z=25/64$ on a $32\times32\times32$ grid, and the approximate velocity in the fluid region is also included.
%In Table \ref{table5}, the corresponding data is given.

\begin{figure}[htbp]
\begin{center}
\includegraphics[width=8cm,height=6.3cm,clip,trim=0cm 0cm 0.5cm 1cm]{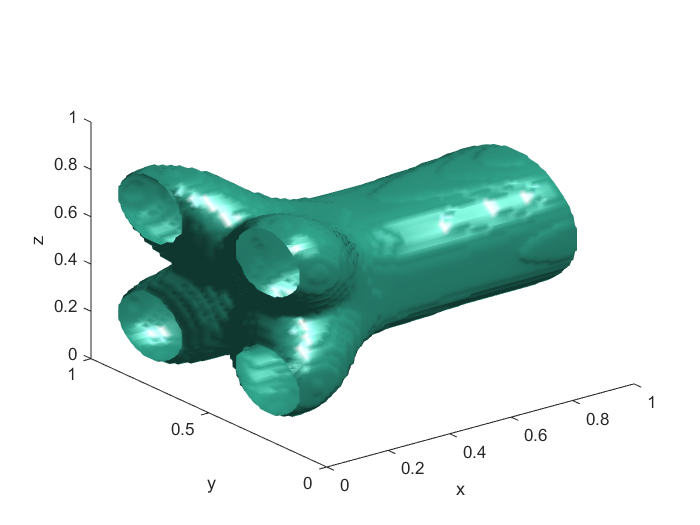}
\includegraphics[width=6.5cm,height=5.3cm,clip,trim=0cm 0cm 0cm 0.5cm]{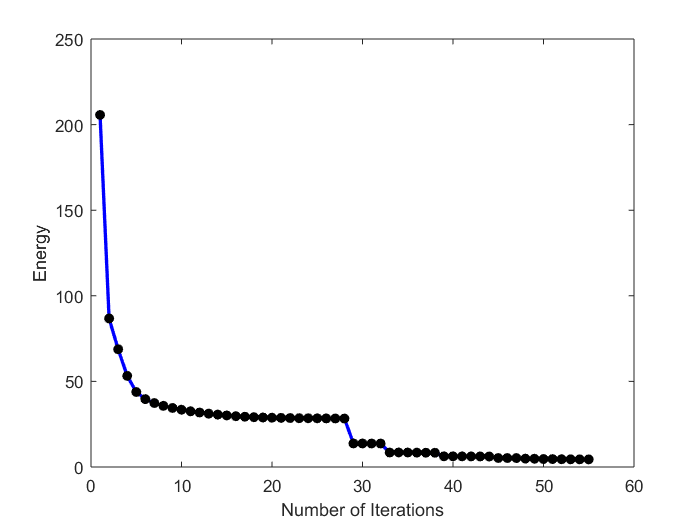}
\end{center}
\caption{\footnotesize (Example 6.6) Left: Optimal design result on a $32\times32\times32$ grid. Right: Energy curve.
In this case the parameters are set as $\bar{\alpha}=2.5\times10^4$, $\tau=0.05$, $\gamma=0.01$.}\label{ex6f2}
\end{figure}

\begin{figure}[htbp]
\begin{center}
\includegraphics[width=8cm,height=6.3cm,clip,trim=0cm 0cm 0.5cm 1cm]{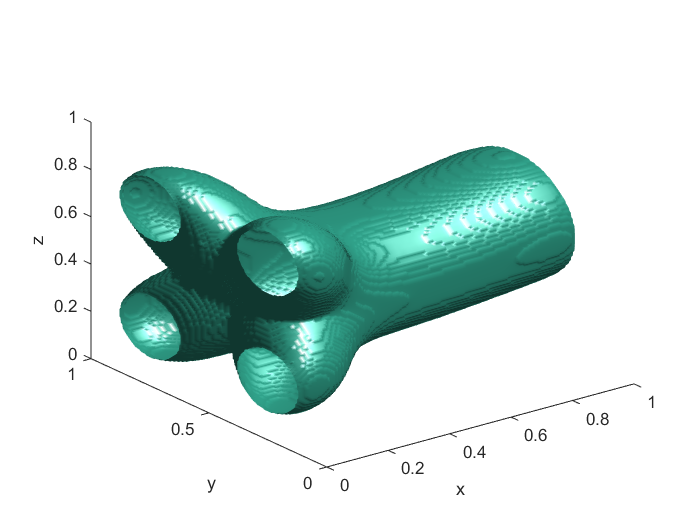}
\includegraphics[width=6.5cm,height=5.3cm,clip,trim=0cm 0cm 0cm 0.5cm]{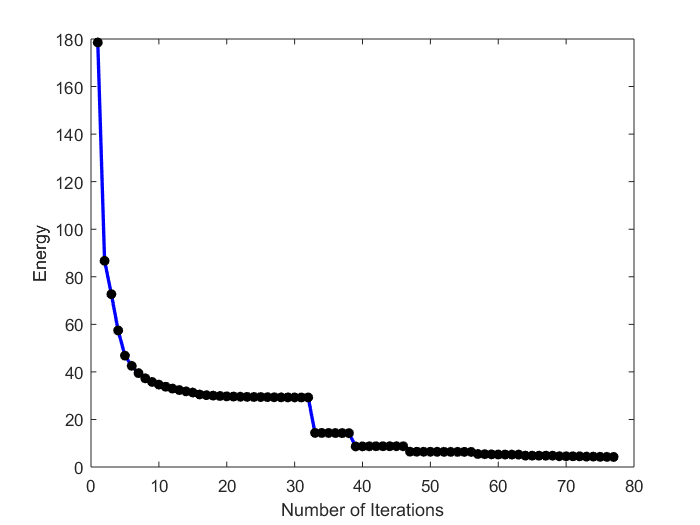}
\end{center}
\caption{\footnotesize (Example 6.6) Left: Optimal design result on a $64\times64\times64$ grid. Right: Energy curve.
In this case the parameters are set as $\bar{\alpha}=2.5\times10^4$, $\tau=0.05$, $\gamma=0.01$.}\label{ex6f3}
\end{figure}

\begin{figure}[htbp]
\begin{center}
\includegraphics[width=9cm,height=5.6cm,clip,trim=0cm 1cm 0cm 0.5cm]{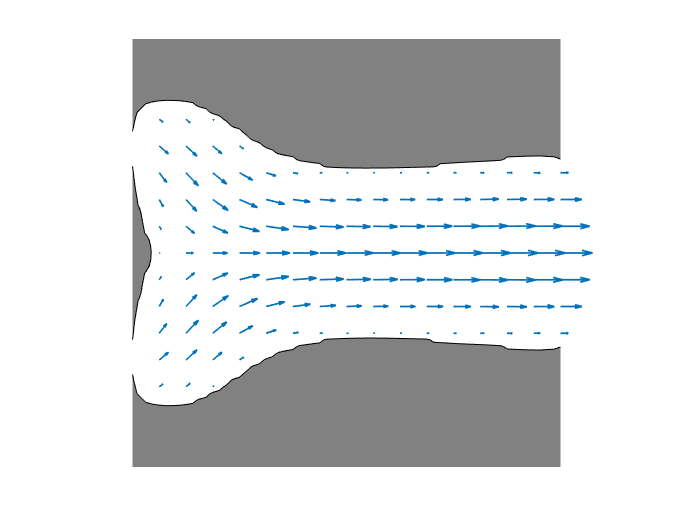}
\end{center}
\caption{\footnotesize (Example 6.6) The slice of optimal design result and the approximate velocity in fluid region at $z=25/64$ on a $32\times32\times32$ grid. The parameters are set as $\bar{\alpha}=2.5\times10^4$, $\tau=0.05$, $\gamma=0.01$.}\label{ex6f4}
\end{figure}

\subsection{Discussions on the robustness and efficiency of our algorithm}
The  numerical results in the previous subsections demonstrated the robustness and efficiency of our algorithm. First,  the final optimal design result seems to be  insensitive to the initial distribution of $\chi_1$. As shown in the first and second two dimensional examples for the case with $\overline{\alpha} = 2.5\times 10^4$, even with a random initial distribution of $\chi_1$, we always get the same final optimal diffuser (cf. Figures \ref{ex1f2_dis}-\ref{ex1f2_energ_vel_0}).   From the viewpoint of energy stability, the energy decaying property is proved mathematically and observed numerically for the problem with different initial distributions of $\chi_1$.  Moreover, from the numerical results in Figure \ref{ex1f2_diff_pram} for the Example 6.1 and Figure \ref{ex5f4} for the Example 6.5, we can see that our algorithm is also robust for the different choices of parameters used in the algorithm.

Next, we compare some of the numerical results above with some existing methods for topology optimization of fluids in Stokes flow in the literature. We mainly compare the numerical results of our algorithm with the results using  MMA in \cite{Borrvall2003} and the level set approach in \cite{Challis2009}.

\begin{table}[ht!]\renewcommand{\arraystretch}{1.2}
\footnotesize
\begin{tabular}{c|c|c }
Methods & Grid & Number of iterations \\ \hline\hline
MMA & $100\times 100$  & 33   \\
Level set & $96\times 96$  & 197  \\
Our algorithm & $128\times 128$  & 21\\
\hline
 \end{tabular}
\smallskip
\smallskip
\caption{\footnotesize Comparison of the number of iterations of different methods to obtain the optimal design result for Example 6.1 with $\overline{\alpha} = 2.5\times 10^4$. The parameters used in our algorithm are set as $\tau = 0.001$, $\gamma=0.01$.}\label{table1}
\end{table}

 \begin{table}[ht!]\renewcommand{\arraystretch}{1.2}
\footnotesize
\begin{tabular}{c|c|c }
Methods & Grid & Number of iterations \\ \hline\hline
MMA & $150\times 100$  & 236   \\
Level set & $216\times 144$  & 681  \\
Our algorithm & $192\times 128$  & 35\\
\hline
 \end{tabular}
\smallskip
\smallskip
\caption{\footnotesize Comparison of the number of iterations of different methods to obtain the optimal design result for Example 6.2 with $d= 1.5$. The parameters used in our algorithm are set as $\tau = 0.01$, $\gamma=0.0001$.}\label{table2}
\end{table}

 \begin{table}[ht!]\renewcommand{\arraystretch}{1.2}
\footnotesize
\begin{tabular}{c|c|c }
Methods & Surface force density & Number of iterations \\ \hline\hline
 &  $-1125$  & 229   \\
MMA & $562.5$    & 66   \\
 & $1687.5$    & 69\\
 \hline
  &  $-1125$  & 21   \\
Our algorithm & $562.5$    & 18   \\
 & $1687.5$    & 28\\
\hline
 \end{tabular}
\smallskip
\smallskip
\caption{\footnotesize Comparison of the number of iterations of the MMA and our algorithm to obtain the optimal design results for Example 6.3. The MMA is tested on a grid $100\times 100$, and our algorithm is tested on a grid $128\times 128$.}\label{table3}
\end{table}

%\begin{table}[ht!]\renewcommand{\arraystretch}{1.2}
%\footnotesize
%\begin{tabular}{c|c|c|c|c }
%$\rm Methods $&$\rm Mesh $&$\rm Iteration~Number$&$\rm core$ & $\rm CPU~time~(h)$ \\
%\hline\hline
%Our algorithm &$32\times32\times32$ & 33 & 32& 0.21   \\
% &$64\times64\times64$ & 48 &20 & 3.10\\
%\hline
%Level set & $36\times36\times36$& 316 & 64 & 2.5\\
% &$60\times60\times60 $& 647 & 64 & 50.03\\
% \hline
% \end{tabular}
%\smallskip
%\smallskip
%\caption{\footnotesize Comparison of different methods for Example 6.5. The parameters used in our algorithm are set as $\bar{\alpha}=2.5\times10^4$, $\tau = 0.05$, $\gamma=0.01$.}\label{table4}
%\end{table}

\begin{table}[ht!]\renewcommand{\arraystretch}{1.2}
\footnotesize
\begin{tabular}{c|c|c }
\rm Methods &\rm Grid &{\rm Number of iterations}\\
\hline\hline
Level set & $36\times36\times36$& 316  \\
 &$60\times60\times60 $& 647  \\
\hline
Our algorithm &$32\times32\times32$ & 33    \\
 &$64\times64\times64$ & 48  \\
 \hline
 \end{tabular}
\smallskip
\smallskip
\caption{\footnotesize Comparison of different methods for Example 6.5. The parameters used in our algorithm are set as $\bar{\alpha}=2.5\times10^4$, $\tau = 0.05$, $\gamma=0.01$.}\label{table4}
\end{table}

In our algorithm, only a Brinkman problem is solved without the need to solve adjoint problem at each iteration step, and the indicator functions of fluid-solid regions are easily updated based on simple convolutions followed by a thresholding step. Therefore, the computational cost at each iteration is less than that in MMA \cite{Borrvall2003} or in the level set approach \cite{Challis2009}. Thus, our algorithm is much simpler and easier to implement than those methods. Tables \ref{table1} and \ref{table2} list the number of iterations of our algorithm, the MMA, and the level set approach for two examples, Table \ref{table3} shows the number of iterations of the MMA and our algorithm for Example 6.3, and Table \ref{table4} presents the number of iterations of the level set approach and our algorithm for Example 6.5. We can see that our algorithm converges in many fewer steps.

\section{Discussion and conclusions}\label{sec:conclusion}
In this paper, we introduce a new efficient threshold dynamics method for topology optimization for fluids in Stokes flow. We aim to minimize a total energy functional that consists of the dissipation power and the perimeter approximated by nonlocal energy. During the iterations of the algorithm, only a Brinkman equation requires solution by a mixed finite-element method, and the indicator functions of fluid-solid regions are updated by a thresholding step that is based on the convolutions computed by the FFT. A simple adaptive in time strategy is used to accelerate the convergence of the algorithm. The total energy decaying property of the proposed algorithm is rigorously proved and observed numerically. Several numerical examples are tested to verify the efficiency of the new algorithm, and we show that the new algorithm converges more rapidly for most the examples than the MMA used in \cite{Borrvall2003}.  Compared to existing methods for topology optimization for fluids, we believe that the proposed algorithm is simple and easy to implement.   For the numerical experiments that we have performed thus far, the proposed method always finds an optimal topology and the numerical results are insensitive to the initial guess and parameters. We believe that our algorithm can also be extended to topology optimization for fluids in Navier-Stokes flow.

\providecommand{\href}[2]{#2}

\end{document}